\newtheorem{thm}{Theorem}[section]
\newtheorem{cor}[thm]{Corollary}
\newtheorem{lem}[thm]{Lemma}
\newtheorem{prop}[thm]{Proposition}
\newtheorem{rem}[thm]{Remark}
\theoremstyle{definition}
\numberwithin{equation}{section}
\newcommand{\R}{\mathbb R}
\newcommand{\e}{\varepsilon}
\newcommand{\p}{\partial}
\newcommand{\dist}{\mbox{dist}\,}
\newcommand{\diam}{\mbox{diam}\,}
\newcommand{\comment}[1]{}
\def\h{\hspace*{.24in}}
\begin{document} 

\title[Global Lipschitz estimates for the Monge--Amp\`ere eigenfunctions]{Global Lipschitz and Sobolev estimates for the Monge--Amp\`ere eigenfunctions of general bounded convex domains}
\author{Nam Q. Le}
\address{Department of Mathematics, Indiana University, 831 E 3rd St,
Bloomington, IN 47405, USA}
\email{nqle@iu.edu}
\thanks{The author was supported in part by the National Science Foundation under grants DMS-2054686 and DMS-2452320.}

\subjclass[2020]{ 35J96, 35J70, 35B45, 35B51}
\keywords{Monge--Amp\`ere eigenfunctions, Degenerate Monge--Amp\`ere equation, Lipschitz estimate, Sobolev estimate, subsolution, comparison principle}

% ----------------------------------------------------------------
\maketitle
\begin{abstract}

We show that the Monge--Amp\`ere eigenfunctions of general bounded convex domains are globally Lipschitz. The same result holds for 
 convex solutions to degenerate Monge--Amp\`ere equations of the form $\det D^2 u =M|u|^p$ with zero boundary condition on general bounded convex domains in $\R^n$ within the sharp threshold $p>n-2$. 
As a consequence, we obtain global $W^{2, 1}$ estimates for these solutions.

\medskip

\begin{center}
{\bf R\'esum\'e}
\end{center}

Nous montrons que les fonctions propres de l'op\'erateur de Monge--Amp\`ere des domaines convexes born\'es g\'en\'eraux sont globalement lipschitziennes. Le m\^eme r\'esultat est 
valable pour les solutions convexes des \'equations de Monge--Amp\`ere d\'eg\'en\'er\'ees de la forme $\det D^2 u =M|u|^p$ avec condition aux limites nulle sur les domaines convexes born\'es g\'en\'eraux de $\R^n$ dans le seuil pr\'ecis $p>n-2$.
En cons\'equence, nous obtenons des estimations globales dans $W^{2, 1}$ pour ces solutions.
 \end{abstract}
\section{Introduction and  statement of the main results}
\subsection{Motivations}
We are interested in obtaining global Lipschitz and Sobolev regularity 
for nonzero convex Aleksandrov solutions  $u\in C(\overline{\Omega})$ to the degenerate Monge--Amp\`ere equation
  \begin{equation}
  \label{up_eq}
   \det D^{2} u~=M |u|^p \h~\text{in} ~\Omega, \quad
u =0\h~\text{on}~\p \Omega,
\end{equation}
where $M>0$, $p> n-2$, and $\Omega\subset \R^n$ $(n\geq 2)$ is a general bounded convex domain that is not assumed to be smooth nor strictly convex.
The case $p=n$ corresponds to the Monge--Amp\`ere eigenvalue problem. When $p\neq n$, by multiplying $u$ with a suitable positive constant, we can always assume $M=1$. For $0<p\neq n$, the existence of a nonzero convex Aleksandrov solution to \eqref{up_eq} is guaranteed by \cite[Theorem 4.2]{LSNS} and it is unique when $p<n$. 

\medskip
Regarding interior regularity, solutions to \eqref{up_eq} satisfy $u\in C^{\infty}(\Omega)$; see, for example, \cite[Proposition 2.8]{LSNS} and \cite[Proposition 6.36]{Lbook}. On smooth and uniformly convex domains, the global smoothness of solutions to \eqref{up_eq} is by now well understood, thanks to \cite{HHW} in two dimensions and \cite{LS, S} in all dimensions; see also \cite{HL} for the global analyticity. 

\medskip
The eigenvalue problem for the Monge--Amp\`ere operator $\det D^2 u$ on uniformly convex domains $\Omega$ in $\R^n$ ($n\geq 2$) with  smooth boundary was 
first investigated by Lions \cite{Ln}. He showed that there exist a unique positive constant $\lambda=\lambda(\Omega)$ and a unique (up to positive multiplicative constants) nonzero 
convex function $u\in C^{1,1}(\overline{\Omega})\cap C^{\infty}(\Omega)$ solving the Monge--Amp\`ere eigenvalue problem: 
\begin{equation}\label{EPLi}
 \left\{
 \begin{alignedat}{2}
   \det D^2 u ~&=\lambda |u|^n \quad~&&\quad\text{in} ~\Omega, \\\
 u&= 0 \quad~&&\quad\text{on}~ \p \Omega.
 \end{alignedat}
 \right.
  \end{equation}
The constant $\lambda(\Omega)$ is called the Monge--Amp\`ere eigenvalue of $\Omega$. The nonzero convex solutions to \eqref{EPLi} are called the Monge--Amp\`ere eigenfunctions of $\Omega$.

\medskip
A variational characterization of  $\lambda(\Omega)$ was found by Tso \cite{Tso} who discovered that for uniformly convex domains $\Omega$ with sufficiently smooth boundaries, the following formula holds:
\begin{multline}
 \label{lamT}
 \lambda(\Omega)=\inf\Bigg\{ \frac{\int_{\Omega} |u|\det D^2 u\,dx}{\int_{\Omega}|u|^{n+1}\,dx}: u\in C^{0,1}(\overline{\Omega})\cap C^2(\Omega)\setminus\{0\},\\u~\text{is convex in } 
 \Omega,~u=0~\text{on}~\p\Omega\Bigg\}.
\end{multline}

\medskip
For general bounded convex domains $\Omega\subset\R^n$, the Monge--Amp\`ere eigenvalue problem \eqref{EPLi} was studied in \cite{LSNS} where 
 the existence, uniqueness and variational characterization of the Monge--Amp\`ere eigenvalue, and the uniqueness of the convex Monge--Amp\`ere eigenfunctions were obtained. In particular, there is a unique positive constant $\lambda=\lambda[\Omega]$ for problem \eqref{EPLi}  to have a nonzero convex solution $u\in C(\overline{\Omega})$, and the Monge--Amp\`ere eigenvalue $\lambda[\Omega]$ is characterized by 
 \begin{equation}
\label{lam_def}
\lambda[\Omega] =\inf\Bigg\{ \frac{\int_{\Omega} |u| \det D^2 u\,dx }{\int_{\Omega}|u|^{n+1}\, dx}: u\in C(\overline{\Omega})\setminus\{0\},
 \,u~\text{is convex in } \Omega,~ u=0~\text{on}~\p\Omega\Bigg\}.
 \end{equation}
 
 \medskip
Throughout, by an abuse of notation, we use $\det D^2 u~ dx$ to denote the Monge--Amp\`ere measure associated with a convex function $u$ on $\Omega$ (for which we refer to \cite[Chapter 3]{Lbook} for more details).
When $\Omega$ is uniformly convex with smooth boundary, by uniqueness, $\lambda(\Omega)$ defined by \eqref{lamT} and  $\lambda[\Omega]$ defined by \eqref{lam_def} must be equal, though the set of competitors for $\lambda(\Omega)$ is strictly contained in those for $\lambda[\Omega]$.  
We chose the bracket notation $\lambda[\Omega]$ in \eqref{lam_def} for the Monge--Amp\`ere eigenvalue of a general bounded convex domain $\Omega$ to emphasize that the boundary $\p\Omega$ might have flat parts or corners.

 \medskip
It was shown in \cite[Theorem 1.1]{LSNS} that the infimum in (\ref{lam_def}) is achieved by a 
 nonzero convex function $u\in C^{0,\beta}(\overline{\Omega})\cap C^{\infty}(\Omega)$ for all $\beta\in (0, 1)$ and the pair $(u,\lambda[\Omega])$ solves \eqref{EPLi}. This makes one wonder if one can take $\beta=1$ so as to extend Tso's characterization of the Monge--Amp\`ere eigenvalue from uniformly convex domains with smooth boundaries to general bounded convex domains. This boils down to the question of whether the Monge--Amp\`ere eigenfunctions of general bounded convex domains are globally Lipschitz. 
 We will answer positively this question in Theorem \ref{thm1}.
 Due to the degeneracy of \eqref{EPLi} or more general equation \eqref{up_eq} near the boundary, the global Lipschitz regularity is a very nontrivial issue, as reviewed below.

 \medskip

\begin{enumerate}
\item[$\bullet$]When $p<n-2$, \cite[Theorem 1.1]{LCPAA} shows that the unique nonzero convex solution $u$ to (\ref{up_eq}) has its gradient blowing up near any {\it flat part of the boundary}. 
However, for $0<p<n-2$, 
\cite[Proposition 1]{LDCDS} shows that for all $\beta \in (0, \frac{2}{n-p})$, one has
\begin{equation}
\label{ineqpn2}
|u(x)| \leq C(n, p,\beta, \Omega)[\dist(x,\p\Omega)]^{\beta} \quad\text{for all } x\in\Omega.
\end{equation}

\item[$\bullet$] When $n=2$ and $p=n-2=0$, the explicit solution to \eqref{up_eq}  on a planar triangle (see, for example, \cite[Section 2.2]{LCPAA} and \cite[Example 3.32]{Lbook}) shows that 
$u$ is only log-Lipschitz. The optimal boundary behavior for the nonzero convex solution to \eqref{up_eq} in the case $p=n-2>0$ has not been addressed in the literature.

\item[$\bullet$] When $p>n-2$, \cite[Corollary 1.5]{LCPAA} shows that convex solutions to \eqref{up_eq} are globally log-Lipschitz:
 \begin{equation}
 \label{logLip}
 |u(x)|\leq C(\Omega, n, p) \dist (x,\p\Omega) (1+ |\log \dist(x,\p\Omega)|) \|u\|_{L^{\infty}(\Omega)}\quad \text{for all }x\in\Omega.\end{equation} 
\end{enumerate}

 \subsection{Main results}
 Our first main theorem removes the logarithmic term in \eqref{logLip}, thereby establishing the global Lipschitz regularity for convex solutions to \eqref{up_eq} when $p>n-2$.
\begin{thm}[Global Lipschitz estimates for the Monge--Amp\`ere eigenfunctions] 
\label{thm1}
Let $\Omega\subset\R^n$ $(n\geq 2)$ be a bounded convex domain. Let $p>n-2$ and let $u\in C(\overline{\Omega})$ be a nonzero convex function. 
\begin{enumerate}
\item[(i)] Assume that $u$ solves the degenerate Monge--Amp\`ere equation
 \begin{equation*}
 \left\{
 \begin{alignedat}{2}
   \det D^{2} u~&=M |u|^{p} \h~&&\text{in} ~\Omega, \\\
u &=0\h~&&\text{on}~\p \Omega,
 \end{alignedat}
 \right.
\end{equation*}
where  $M>0$. Then $u$ is globally Lipschitz and we have the estimate
 \[|u(x)|\leq C(|\Omega|, \diam(\Omega), n, p, M) \dist (x,\p\Omega) \|u\|_{L^{\infty}(\Omega)}\quad \text{for all }x\in\Omega.\] 
 \item[(ii)] Assume that $u$ satisfies in the sense of Aleksandrov
 \begin{equation*}
   \det D^{2} u~\leq M [\dist(\cdot,\p\Omega)]^{p} \quad \text{in} ~\Omega, \quad
u =0\quad \text{on}~\p \Omega,
\end{equation*}
where  $M>0$. Then $u$ is globally Lipschitz and we have the estimate
 \[|u(x)|\leq C(\diam(\Omega), n, p, M) \dist (x,\p\Omega)\quad \text{for all }x\in\Omega.\] 
 \end{enumerate}
\end{thm}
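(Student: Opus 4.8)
The plan is to prove part (ii) first, since part (i) reduces to it: if $u$ solves $\det D^2 u = M|u|^p$ with $u=0$ on $\partial\Omega$, then by the log-Lipschitz bound \eqref{logLip} we have $|u(x)|\le C\dist(x,\partial\Omega)(1+|\log\dist(x,\partial\Omega)|)\|u\|_{L^\infty}$, which certainly gives $|u(x)|\le C[\dist(x,\partial\Omega)]^{p/(n-1)}\cdot(\text{something})$-type control; more simply $|u|^p\le C[\dist(\cdot,\partial\Omega)]^p\cdot(\ldots)$ up to logarithmic factors, and one iterates. Actually the cleaner route: since $p>n-2$, the already-known bound $|u|\le C\dist(\cdot,\partial\Omega)^\beta$ for $\beta$ close to $1$ (or the log-Lipschitz bound) gives $\det D^2 u=M|u|^p\le C[\dist(\cdot,\partial\Omega)]^{\beta p}$ with $\beta p$ as close to $p$ as we like, and $p>n-2$ means we can feed this into part (ii) with a subsolution whose right-hand side decays like $[\dist]^{q}$ for some $q>n-2$. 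So the heart of the matter is the purely Aleksandrov-subsolution statement in (ii), and I would isolate it as the key lemma.

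For part (ii), the strategy is a barrier construction: fix a boundary point $x_0\in\partial\Omega$, and build an explicit convex function $w$ on $\Omega$ (or on a suitable subdomain near $x_0$) with $\det D^2 w \ge M[\dist(\cdot,\partial\Omega)]^p$ in the Aleksandrov sense, $w\le 0$ on $\partial\Omega$ near $x_0$, and $w(x)\ge -C\dist(x,\partial\Omega)$, and then invoke the comparison principle to get $u\ge w$, hence $|u(x)|=-u(x)\le -w(x)\le C\dist(x,\partial\Omega)$. Since $\Omega$ is convex, after rotating we may assume near $x_0$ the boundary lies above the tangent hyperplane $\{x_n=0\}$ and $\Omega\subset\{x_n>0\}$, so $\dist(x,\partial\Omega)\le x_n$. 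The natural ansatz is $w=-A x_n + (\text{convex correction controlling the transverse directions})$; for the Monge--Amp\`ere operator one typically takes something like $w = -A x_n + B\,(|x'|^2 x_n + x_n^{1+\alpha})$ or a power of a defining paraboloid, chosen so that a few rows of $D^2 w$ are of size $x_n^{\alpha-1}$ and the determinant is $\gtrsim x_n^{(1+\alpha-\cdots)}$. One wants the exponent produced by the determinant to be $\le p$ near $\{x_n=0\}$, which is exactly where $p>n-2$ enters: with $n-1$ transverse directions each contributing a bounded second derivative and one direction contributing $x_n^{\alpha-1}$, scaling considerations force the determinant to behave like a quantity with $x_n$-exponent that can be made $>n-2$, matching the admissible decay of the right-hand side.

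The main obstacle I anticipate is making the barrier work \emph{globally} and \emph{uniformly in the shape of} $\Omega$ — i.e., producing constants depending only on $\diam(\Omega)$, $n$, $p$, $M$ and not on any modulus of convexity or smoothness of $\partial\Omega$. Near a flat face of $\partial\Omega$ the localized half-space model above is essentially exact and the barrier is clean; near a corner or a highly degenerate boundary point one must patch these local barriers together, which is where care is needed. A robust way to handle this is to work on the John-ellipsoid normalization: by John's lemma there is an affine map $T$ with $B_1\subset T\Omega\subset B_n$, and the Monge--Amp\`ere measure transforms by $(\det T)^2$ while $\dist(\cdot,\partial\Omega)$ is comparable up to $\det T$-dependent factors to the normalized distance; one then runs the barrier argument on the normalized domain where all relevant quantities are under control, and transfers back, tracking how $|\Omega|$ and $\diam(\Omega)$ enter (in part (i), $\|u\|_{L^\infty}$ absorbs the scaling). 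A convenient single choice of barrier, valid for any convex $\Omega$, is $w(x) = -C\big(\,\ell(x)^{\gamma} + (\text{quadratic in the full coordinates bounded by }\diam(\Omega)^2)\,\ell(x)\,\big)$ where $\ell$ is an affine function vanishing on a supporting hyperplane at the relevant boundary point, with $\gamma=\gamma(n,p)\in(0,1)$ and $\gamma>\frac{n-1}{n}$ or the like chosen so that $\det D^2 w$ dominates $[\dist]^p$; verifying the determinant inequality for this $w$ is a direct computation using that at least $n-1$ eigenvalues of $D^2 w$ are bounded below by a fixed constant times $C\ell$ and the remaining one is $\gtrsim C\ell^{\gamma-2}$, giving $\det D^2 w\gtrsim C^n \ell^{(n-1)+(\gamma-2)}=C^n\ell^{\gamma+n-3}$, and requiring $\gamma+n-3\le p$, i.e. $\gamma\le p-n+3$, which is compatible with $\gamma<1$ precisely when $p>n-2$. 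With the determinant inequality and boundary ordering in hand, the comparison principle closes the argument, and the linear-in-$\ell$ term ensures $|w(x)|\le C\dist(x,\partial\Omega)$ as required.
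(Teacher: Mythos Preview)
Your reduction of (i) to (ii) via the known log-Lipschitz bound is fine in principle (pick $\beta\in((n-2)/p,1)$, get $\det D^2 u\le C[\dist]^{\beta p}$ with $\beta p>n-2$, apply (ii)), though the paper handles (i) more directly via a nonlinear comparison principle (Lemma~\ref{comp_lem}) that compares a subsolution of $\det D^2 v\ge c|v|^p$ against a supersolution of $\det D^2 u\le M|u|^p$ without passing through (ii) or the log-Lipschitz bound.

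The genuine gap is in the barrier for part (ii). Your proposed function $w=-C(\ell^\gamma+Q\ell)$ with $\gamma\in(0,1)$ cannot yield a \emph{Lipschitz} estimate: near $\{\ell=0\}$ the term $-C\ell^\gamma$ dominates, so $|w|\sim C\ell^\gamma\gg C\ell$. Hence comparison only gives $|u|\le C\ell^\gamma$, which is the H\"older bound already known. Your closing sentence, ``the linear-in-$\ell$ term ensures $|w(x)|\le C\dist(x,\partial\Omega)$,'' is false for this reason. Separately, as written (both terms with the same sign), $w=-C(\ell^\gamma+|x'|^2\ell)$ has tangential second derivatives $\partial_{ii}w=-2C\ell<0$, so it is not even convex. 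The earlier ansatz $w=-Ax_n+B(|x'|^2x_n+x_n^{1+\alpha})$ has the same defect: its Hessian determinant equals $(2B)^{n-1}x_n^{n-2}[B\alpha(1{+}\alpha)x_n^{\alpha}-2B|x'|^2]$, which becomes negative when $x_n\to0$ while $|x'|$ stays bounded away from $0$, so it is not convex near the flat part of the boundary.

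The paper's crucial observation, missing here, is that one must take the exponent on $x_n$ \emph{greater than one}: set $a=(p+2)/n>1$ and use
\[
v(x)=x_n^{a}\big(|x'|^2+A\big)-Bx_n,
\]
with $A,B>0$ depending only on $a$ and $\diam(\Omega)$ (Lemma~\ref{Lipsub}). Flipping the sign in front of $A$ (compared with the classical Caffarelli barrier $x_n^\alpha(|x'|^2-C_\alpha)$, $\alpha<1$) keeps $x_n^{a}(|x'|^2+A)$ convex with $\det D^2\gtrsim x_n^{na-2}=x_n^{p}$; subtracting the linear term $Bx_n$ then forces $v\le 0$ on $\overline\Omega$ without touching $D^2v$. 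Since $v=\text{(nonnegative)}-Bx_n$, one gets $|v|\le Bx_n$, which is exactly the Lipschitz control needed. Standard comparison then gives (ii), and (i) follows either by your reduction or, more cleanly, by the comparison principle in Lemma~\ref{comp_lem}.
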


\medskip
Note that the estimates in Theorem \ref{thm1} give the global Lipschitz property of $u$ due to its convexity. 
By \cite[Lemma 3.1 (iii)]{LSNS},  for $u$ satisfying Theorem \ref{thm1}(i), we have the estimates
\begin{equation}
\label{upinfty}
c(n, p)|\Omega|^{-2} \leq M\|u\|^{p-n}_{L^{\infty}(\Omega)} \leq C(n, p)|\Omega|^{-2}.\end{equation}
Thus, the appearance of $ \|u\|_{L^{\infty}(\Omega)}$ is only necessary in the case of the Monge--Amp\`ere eigenvalue problem.  

\medskip
We quickly mention some implications of Theorem \ref{thm1}.

\medskip

As discussed earlier, an immediate consequence of Theorem \ref{thm1} is the following characterization of the Monge--Amp\`ere eigenvalue $\lambda[\Omega]$ defined in \eqref{lam_def}.
\begin{cor} 
Let $\Omega\subset\R^n$ $(n\geq 2)$ be a bounded convex domain. Then the Monge--Amp\`ere eigenvalue $\lambda[\Omega]$ of $\Omega$ has also the following characterization:
\begin{equation*}\lambda[\Omega] 
 = \inf\Bigg\{ \frac{\int_{\Omega} |u|\det D^2 u\,dx}{\int_{\Omega}|u|^{n+1}\,dx}: u\in C^{0,1}(\overline{\Omega})\cap C^2(\Omega)\setminus\{0\}, u~\text{is convex in } 
 \Omega, u=0~\text{on}~\p\Omega\Bigg\}.
 \end{equation*}
\end{cor}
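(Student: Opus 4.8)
The plan is to compare the two infima directly. Write $\mu[\Omega]$ for the quantity on the right-hand side of the asserted identity, i.e.\ the infimum of the quotient $\int_{\Omega}|u|\det D^2 u\,dx\big/\int_{\Omega}|u|^{n+1}\,dx$ taken over nonzero convex functions $u\in C^{0,1}(\overline{\Omega})\cap C^2(\Omega)$ vanishing on $\p\Omega$. Every such $u$ lies in $C(\overline{\Omega})\setminus\{0\}$ and is convex with $u=0$ on $\p\Omega$, hence is admissible in the larger competitor class defining $\lambda[\Omega]$ in \eqref{lam_def}; this gives the trivial inequality $\lambda[\Omega]\le\mu[\Omega]$ at once. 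The entire content of the corollary is therefore the reverse inequality $\mu[\Omega]\le\lambda[\Omega]$, and it suffices to produce a single admissible competitor for $\mu[\Omega]$ whose quotient equals $\lambda[\Omega]$.

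For this I would invoke \cite[Theorem 1.1]{LSNS}: the infimum defining $\lambda[\Omega]$ is attained by a nonzero convex $u\in C^{0,\beta}(\overline{\Omega})\cap C^{\infty}(\Omega)$ (for every $\beta\in(0,1)$), and the pair $(u,\lambda[\Omega])$ solves the eigenvalue problem \eqref{EPLi}. In particular $\det D^2 u=\lambda[\Omega]\,|u|^{n}$ in $\Omega$ with $u=0$ on $\p\Omega$, which is exactly the hypothesis of Theorem \ref{thm1}(i) with $p=n>n-2$ and $M=\lambda[\Omega]>0$. Applying Theorem \ref{thm1}(i), $u$ is globally Lipschitz on $\overline{\Omega}$, so $u\in C^{0,1}(\overline{\Omega})$; since also $u\in C^{\infty}(\Omega)\subset C^2(\Omega)$ and $u\not\equiv 0$, this $u$ is an admissible competitor for $\mu[\Omega]$. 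To evaluate the quotient at $u$, note that $u$ is bounded on the bounded set $\Omega$, so $\int_{\Omega}|u|^{n+1}\,dx$ is finite and positive; integrating $\det D^2 u=\lambda[\Omega]\,|u|^{n}$ against $|u|$ yields $\int_{\Omega}|u|\det D^2 u\,dx=\lambda[\Omega]\int_{\Omega}|u|^{n+1}\,dx$, so the quotient equals $\lambda[\Omega]$. Hence $\mu[\Omega]\le\lambda[\Omega]$, and combined with the trivial inequality this gives $\mu[\Omega]=\lambda[\Omega]$.

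I do not expect a genuine obstacle here, because the hard analytic work has been packaged into Theorem \ref{thm1}. The one point deserving care is precisely that the minimizer furnished by \cite{LSNS} is known a priori to lie only in $C^{0,\beta}(\overline{\Omega})$ for $\beta<1$, which would leave it outside the restricted class $C^{0,1}(\overline{\Omega})\cap C^2(\Omega)$; it is the upgrade from $C^{0,\beta}$ to global Lipschitz regularity ($\beta=1$) supplied by Theorem \ref{thm1}(i) that certifies admissibility and closes the argument. (Strictly speaking one could also argue via an approximation of $\lambda[\Omega]$ by near-minimizers, but exhibiting the actual eigenfunction is cleaner and shows the infimum defining $\mu[\Omega]$ is in fact attained.)
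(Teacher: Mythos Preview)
Your proof is correct and follows precisely the approach the paper indicates: the corollary is stated as ``an immediate consequence of Theorem~\ref{thm1}'' because the eigenfunction minimizer from \cite[Theorem~1.1]{LSNS} lies in $C^{0,\beta}(\overline{\Omega})\cap C^{\infty}(\Omega)$ and Theorem~\ref{thm1}(i) upgrades this to $C^{0,1}(\overline{\Omega})$, making it admissible in the restricted class. Your write-up simply spells out the two-inequality comparison that the paper leaves implicit.
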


\medskip
The global Lipschitz estimates in Theorem \ref{thm1}(i) give global $W^{2, 1}$ estimates for the solutions. We use the Hilbert--Schmidt norm for matrices in this note.
\begin{cor}[Global $W^{2, 1}$ estimates for the Monge--Amp\`ere eigenfunctions]
\label{W21cor} Let $\Omega\subset\R^n$ $(n\geq 2)$ be a bounded convex domain. Let $p>n-2$ and let $u\in C(\overline{\Omega})\cap C^\infty(\Omega)$ be a nonzero convex function solving
 the degenerate Monge--Amp\`ere equation
 \begin{equation*}
 \left\{
 \begin{alignedat}{2}
   \det D^{2} u~&=M |u|^{p} \h~&&\text{in} ~\Omega, \\\
u &=0\h~&&\text{on}~\p \Omega,
 \end{alignedat}
 \right.
\end{equation*}
where  $M>0$. Then $D^2 u\in L^1(\Omega)$ with the estimate
\[\int_\Omega \|D^2 u\|\,dx\leq C(|\Omega|, \diam(\Omega), n, p, M) \|u\|_{L^{\infty}(\Omega)}.\] 
\end{cor}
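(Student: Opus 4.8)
The plan is to control $\int_\Omega\|D^2u\|\,dx$ by $\int_\Omega\Delta u\,dx$ and then to bound the latter by exhausting $\Omega$ with convex sublevel sets of $u$, on each of which the divergence theorem is available since $u$ is smooth in the interior. Since $u$ is convex and $C^\infty(\Omega)$, the Hessian $D^2u(x)$ is positive semidefinite, so its eigenvalues $\lambda_1(x),\dots,\lambda_n(x)$ are nonnegative and $\|D^2u(x)\|=\big(\sum_i\lambda_i(x)^2\big)^{1/2}\le\sum_i\lambda_i(x)=\Delta u(x)$. As $\Delta u\ge0$, the quantity $\int_\Omega\Delta u\,dx\in[0,\infty]$ is well defined, and it suffices to prove $\int_\Omega\Delta u\,dx\le C(|\Omega|,\diam(\Omega),n,p,M)\|u\|_{L^\infty(\Omega)}$.

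First I would upgrade the boundary decay estimate of Theorem \ref{thm1}(i) to a uniform gradient bound on $\Omega$. Fix $x\in\Omega$ with $Du(x)\ne0$ and set $h=Du(x)/|Du(x)|$. Let $s_{\max}>0$ be the length of the segment starting at $x$ in the direction $h$ until it first meets $\partial\Omega$, so that $s_{\max}\ge\dist(x,\partial\Omega)$. By convexity, $0=u(x+s_{\max}h)\ge u(x)+s_{\max}\,Du(x)\cdot h=u(x)+s_{\max}|Du(x)|$, and since $u\le0$ in $\Omega$ (a convex function attains its maximum over $\overline\Omega$ on $\partial\Omega$) this gives, together with Theorem \ref{thm1}(i),
\[
|Du(x)|\le\frac{|u(x)|}{s_{\max}}\le\frac{C_1\dist(x,\partial\Omega)\,\|u\|_{L^\infty(\Omega)}}{s_{\max}}\le C_1\|u\|_{L^\infty(\Omega)}=:L,
\]
where $C_1=C(|\Omega|,\diam(\Omega),n,p,M)$. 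Hence $|Du|\le L$ throughout $\Omega$.

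Next I would set, for $t\in\big(0,\|u\|_{L^\infty(\Omega)}\big)$, the sublevel set $S_t:=\{x\in\Omega:u(x)<-t\}$. By the maximum principle for convex functions together with $u\not\equiv0$ one has $u<0$ in $\Omega$, so $S_t$ is a nonempty bounded open convex set with $\overline{S_t}\subset\Omega$ and $\bigcup_{t>0}S_t=\Omega$. Since $\partial S_t$ is Lipschitz and $u\in C^\infty$ near $\overline{S_t}$, the divergence theorem yields
\[
\int_{S_t}\Delta u\,dx=\int_{\partial S_t}\partial_\nu u\,d\mathcal H^{n-1}\le\int_{\partial S_t}|Du|\,d\mathcal H^{n-1}\le L\,\mathcal H^{n-1}(\partial S_t)\le L\,\mathcal H^{n-1}(\partial\Omega),
\]
the last step because for the nested convex bodies $S_t\subset\Omega$ the nearest-point projection onto $\overline{S_t}$ is $1$-Lipschitz and maps $\partial\Omega$ onto $\partial S_t$, hence does not increase $(n-1)$-dimensional measure. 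Using $\mathcal H^{n-1}(\partial\Omega)\le C(n)\diam(\Omega)^{n-1}$ (again by projecting a circumscribed sphere onto $\overline\Omega$) and letting $t\downarrow0$ with $\Delta u\ge0$ and monotone convergence, we obtain $\int_\Omega\Delta u\,dx\le C(n)\diam(\Omega)^{n-1}L=C(|\Omega|,\diam(\Omega),n,p,M)\|u\|_{L^\infty(\Omega)}$, which together with the first paragraph proves the corollary.

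I expect the only genuinely delicate point to be the passage from the interior smoothness of $u$ to a boundary integral: because there is no $C^1$ control up to $\partial\Omega$, the divergence theorem cannot be invoked on $\Omega$ itself, and one must instead work on the compactly contained convex sets $S_t$ and then send $t\to0$. The remaining ingredients — the inequality $\|D^2u\|\le\Delta u$ for positive semidefinite matrices, the gradient bound extracted from convexity and Theorem \ref{thm1}(i), and the monotonicity of perimeter for nested convex sets — are all elementary.
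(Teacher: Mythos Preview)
Your proof is correct and follows essentially the same approach as the paper: both use $\|D^2u\|\le\Delta u$, extract a uniform gradient bound from Theorem~\ref{thm1}(i) and convexity, integrate by parts on a compactly contained convex exhaustion of $\Omega$, and pass to the limit by monotone convergence. The only cosmetic difference is the choice of exhaustion---the paper uses a sequence of smooth convex subdomains $\Omega_m\nearrow\Omega$, whereas you use the sublevel sets $S_t=\{u<-t\}$ and spell out the perimeter monotonicity for nested convex bodies more explicitly.
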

The conclusion of Corollary \ref{W21cor} is false when $p<n-2$ and  $\Omega$ is a rectangular box in $\R^n$, as \cite[Proposition 3.1]{LPDEA} shows that in this situation, $D^2 u\not\in L^{\frac{n-p}{2(n-p)-2} +\e}(\Omega)$ for any $\e>0$, so $D^2 u\not\in L^1(\Omega)$. 

\medskip
Though simple, the proof of Corollary \ref{W21cor} uses interior $C^2$ regularity for solutions to our degenerate Monge--Amp\`ere equation. This interior second-order regularity or even interior $W^{2,1}$ regularity as in the nondegenerate case \cite{DPF, DPFS} is not known for the convex functions in Theorem \ref{thm1}(ii) so it is not clear if global $W^{2, 1}$ estimates are possible for them.

\medskip
Another consequence of Theorem \ref{thm1} is the eventual global Lipschitz regularity for an iterative scheme for the Monge--Amp\`ere eigenvalue problem with general initial data. 
\begin{cor}
 \label{IIScor}
Let $\Omega \subset \R^n$ ($n\geq 2$) be a bounded and convex domain. For a convex function $u$ on $\Omega$,  we define its Rayleigh quotient by
\begin{equation*}
R(u) = \frac{\int_{\Omega} |u|\det D^2 u~dx}{\int_{\Omega} |u|^{n+1}~dx}.
\end{equation*}
Let $u_0 \in C(\Omega)$ be a nonzero convex function on $\Omega$ with $0<R(u_0)<\infty$.
For $k \geq 0$, define the sequence $u_{k+1} \in C(\overline{\Omega})$ to be the convex Aleksandrov solutions of the Dirichlet problem 
\begin{equation}\label{IIS}
\begin{cases}
\det D^2u_{k+1} = R(u_k) |u_k|^n & \quad \text{in } \Omega, \\
u_{k+1} = 0 & \quad \text{on } \partial \Omega.
\end{cases}
\end{equation}
Then $u_{k}\in C^{2k-2}(\Omega)\cap C^{0, 1}(\overline{\Omega})$ for all $k\geq (n+4)/2$.
\end{cor}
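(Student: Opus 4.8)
The plan is to run the iteration and establish the interior $C^{2k-2}$ regularity and the global Lipschitz regularity by two separate bootstraps, the latter invoking Theorem~\ref{thm1}(ii) to cross the Lipschitz threshold. First I would check the iteration is legitimate. For $k\ge0$, $R(u_k)|u_k|^n\,dx$ is a nonnegative measure; since $R(u_0)>0$ forces $\det D^2 u_0$ to have positive mass and, inductively, each $u_k$ is a nonconstant convex function vanishing on $\p\Omega$ (hence $\le0$ on $\overline\Omega$, and therefore $<0$ in $\Omega$, because a convex function that is $\le0$ and vanishes at an interior point must vanish on an entire segment reaching $\p\Omega$), this measure has positive mass, so $u_{k+1}\in C(\overline\Omega)$ is again a nonzero convex function with $u_{k+1}<0$ in $\Omega$; being bounded with $\int_\Omega|u_{k+1}|^{n+1}\,dx>0$ and $\int_\Omega|u_{k+1}|\det D^2 u_{k+1}\,dx<\infty$, it has $0<R(u_{k+1})<\infty$, so the next step is admissible. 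In particular, for $k\ge1$ the right-hand side $f_{k+1}:=R(u_k)(-u_k)^n$ is, on each compact $K\Subset\Omega$, bounded between two positive constants and as regular as $u_k$. Since $u_1$ is convex, hence in $C^{0,1}_{\mathrm{loc}}(\Omega)$, Caffarelli's interior regularity theory gives $u_2\in C^{2,\alpha}_{\mathrm{loc}}(\Omega)$; then $f_3\in C^{2,\alpha}_{\mathrm{loc}}$, and the interior Schauder theory for $\det D^2 u=f$ (see \cite{Lbook}) yields $u_3\in C^{4,\alpha}_{\mathrm{loc}}(\Omega)$, and inductively $u_k\in C^{2k-2,\alpha}_{\mathrm{loc}}(\Omega)\subset C^{2k-2}(\Omega)$ for every $k\ge2$.

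For the global Lipschitz bound I would bootstrap the boundary decay. Since $u_1\in C(\overline\Omega)$ is bounded, $\det D^2 u_2\le R(u_1)\|u_1\|_{L^\infty(\Omega)}^n$, so the global H\"older estimate for the Monge--Amp\`ere equation with bounded right-hand side on a bounded convex domain (see \cite{LSNS, Lbook}) gives $|u_2(x)|\le C[\dist(x,\p\Omega)]^{2/n}$ when $n\ge3$, and $|u_2(x)|\le C_\alpha[\dist(x,\p\Omega)]^\alpha$ for every $\alpha<1$ when $n=2$. Next, if $|u_k(x)|\le C[\dist(x,\p\Omega)]^{\gamma}$ with $n\gamma<n-2$, then $\det D^2 u_{k+1}\le C[\dist(\cdot,\p\Omega)]^{n\gamma}$, and the boundary H\"older estimate for convex subsolutions of $\det D^2 v\le M[\dist(\cdot,\p\Omega)]^{p}$ with $p<n-2$ -- which follows from the comparison principle together with the bounded-right-hand-side estimate, as in the boundary regularity arguments of \cite{LSNS} -- upgrades this to $|u_{k+1}(x)|\le C[\dist(x,\p\Omega)]^{\gamma+2/n}$. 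Hence $|u_k(x)|\le C[\dist(x,\p\Omega)]^{2(k-1)/n}$ as long as $2(k-2)<n-2$, i.e. as long as $k<(n+2)/2$. As soon as $2(k-2)>n-2$, the function $u_k$ is convex with $\det D^2 u_k\le C[\dist(\cdot,\p\Omega)]^{2(k-2)}$ of exponent $>n-2$, so Theorem~\ref{thm1}(ii) makes $u_k$ globally Lipschitz; in the single borderline case $2(k-2)=n-2$ (which occurs only for $n$ even, at $k_0:=(n+2)/2$), the elementary bound $[\dist]^{n-2}\le\diam(\Omega)^{n-2-p'}[\dist]^{p'}$ for $p'<n-2$ first gives $|u_{k_0}(x)|\le C_\e[\dist(x,\p\Omega)]^{1-\e}$ for every $\e>0$, whence $\det D^2 u_{k_0+1}\le C_\e[\dist(\cdot,\p\Omega)]^{n-n\e}$ has exponent $>n-2$ for small $\e$ and Theorem~\ref{thm1}(ii) applies to $u_{k_0+1}$. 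Either way $u_k$ is globally Lipschitz for all $k\ge(n+4)/2$: the property propagates, since $|u_{k-1}(x)|\le C\dist(x,\p\Omega)$ forces $\det D^2 u_k\le C[\dist(\cdot,\p\Omega)]^n$ with exponent $n>n-2$, so Theorem~\ref{thm1}(ii) applies once more. Together with the interior bootstrap, and using $(n+4)/2\ge3>2$, this yields $u_k\in C^{2k-2}(\Omega)\cap C^{0,1}(\overline\Omega)$ for all $k\ge(n+4)/2$.

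The main obstacle, beyond the exponent bookkeeping, is the borderline case $p=n-2$: there neither the subsolution H\"older estimate ($p<n-2$) nor Theorem~\ref{thm1}(ii) ($p>n-2$) applies directly, and the one-extra-iteration interpolation above is exactly what makes the threshold land at $(n+4)/2$ for even $n$ (for odd $n$ the same argument in fact gives the smaller threshold $(n+3)/2$, which of course implies the stated bound). A secondary point deserving care is maintaining $u_k<0$ throughout $\Omega$, which is needed both to keep the Rayleigh quotients $R(u_k)$ finite and positive and to keep the right-hand sides of the interior equations nondegenerate.
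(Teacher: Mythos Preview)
Your approach is correct and matches the paper's strategy: bootstrap the boundary decay exponent via comparison with the $w_\alpha$ subsolutions until the exponent on $\det D^2 u_k$ exceeds $n-2$, then invoke Theorem~\ref{thm1}(ii), handling the borderline $p=n-2$ by sacrificing a little in the exponent and doing one extra iteration. The only cosmetic difference is that the paper launches the bootstrap at $u_1$ via the Aleksandrov estimate (obtaining $|u_k|\le C[\dist(\cdot,\p\Omega)]^{(2k-1)/n}$ for $1\le k\le n/2$) rather than at $u_2$ via the bounded right-hand side, which swaps which parity of $n$ lands on the borderline case; both routes reach global Lipschitz regularity at $k=\lfloor (n+4)/2\rfloor$.
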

The iterative scheme \eqref{IIS} was first introduced by Abedin and Kitagawa \cite{AK} to solve the Monge--Amp\`ere eigenvalue problem \eqref{EPLi}
for a large class of convex initial data $u_0$ satisfying $\det D^2 u_0\geq c_0>0$. 
The sequence $\{u_k\}_{k=1}^{\infty}$ is obtained by repeatedly inverting the Monge--Amp\`ere operator with Dirichlet boundary condition. 
The iterative scheme (\ref{IIS})  was showed in \cite[Theorem 1.4]{LArx} to converge to problem \eqref{EPLi} for all  convex initial data having finite and nonzero Rayleigh quotient. We refer to \cite{LLQ} for numerical analysis of  the scheme \eqref{IIS}.

\medskip
The threshold $p>n-2$ in Theorem \ref{thm1} is sharp in all dimensions as global Lipschitz estimates are not possible in general for \eqref{up_eq} when $p\leq n-2$. 
The case  $p<n-2$ was treated in \cite[Theorem 1.1]{LCPAA}.
Our next theorem shows that when $p=n-2$, the nonzero convex solution of \eqref{up_eq} can have log-Lipschitz type behavior near the flat part of the boundary.
\begin{thm} [Infinite boundary gradient for degenerate Monge--Amp\`ere equations]
\label{pn2}
Let $\Omega\subset\R^n$ $(n\geq 2)$ be a bounded convex domain.
Let $u\in C(\overline{\Omega})\cap C^{\infty}(\Omega)$ be the nonzero convex solution to
  \begin{equation*}
 \left\{
 \begin{alignedat}{2}
   \det D^{2} u~&= |u|^{n-2} \h~&&\text{in} ~\Omega, \\\
u &=0\h~&&\text{on}~\p \Omega.
 \end{alignedat}
 \right.
\end{equation*}
\begin{enumerate}
\item[(i)] We have the estimate
 \[|u(z)|\leq C(\diam(\Omega), n) \dist (z,\p\Omega) \Big(1+ |\log \dist(z,\p\Omega)|^{\frac{n}{2}}\Big) \quad \text{for all }z\in\Omega.\] 
\item[(ii)] Assume that there is a nonempty closed subset $\Gamma$ of the boundary $\p\Omega$ that lies on a hyperplane. Then, for $x\in\Omega$ sufficiently close to the interior of $\Gamma$, we have 
 \[|u(x)| \geq c(n, \Omega,\Gamma)\dist (x,\p\Omega)\big|\log \dist (x,\p\Omega)\big|^{\frac{1}{n}}. \]
 \end{enumerate}
\end{thm}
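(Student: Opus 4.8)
For part (i) the plan is to run a bootstrap on the right‑hand side of $\det D^2u=|u|^{n-2}$ and to quantify precisely the borderline case $p=n-2$ of the barrier behind Theorem~\ref{thm1}(ii). Since $u\in C(\overline\Omega)$ is the nonzero convex solution, the $p=n-2$ case of the $L^\infty$–volume bound \eqref{upinfty} gives $\|u\|_{L^\infty(\Omega)}\le C(n,|\Omega|)$, so $\det D^2u=|u|^{n-2}$ is bounded and Aleksandrov's maximum principle yields the crude estimate $|u(x)|\le C_0[\dist(x,\p\Omega)]^{1/n}$. Writing $d=\dist(\cdot,\p\Omega)$, if $|u|\le C_k\,d^{\gamma_k}$ then $\det D^2u=|u|^{n-2}\le C_k^{n-2}d^{\gamma_k(n-2)}$, and the barrier estimate underlying Theorem~\ref{thm1}(ii), applied with the subcritical power $\alpha=\gamma_k(n-2)<n-2$, improves this to $|u|\le C_{k+1}d^{\gamma_{k+1}}$ with $\gamma_{k+1}=(\gamma_k(n-2)+2)/n$, a contraction toward the fixed point $\gamma=1$, so that $\gamma_k\uparrow1$ geometrically. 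The heart of the matter is that the constant in this barrier estimate degenerates, and only linearly, as the gap $n-2-\alpha$ shrinks to $0$; keeping track of this degeneration and then choosing the number of iterations as a function of $d$ (the optimal choice is $k\sim\log|\log d|$) converts the sequence of estimates $|u|\le C_k d^{\gamma_k}$ into the single bound $|u(x)|\le C(|\Omega|,\diam\Omega,n)\,d\,(1+|\log d|)^{n/2}$. Equivalently, one may carry a logarithmic weight throughout: a threshold barrier lemma of the shape ``$\det D^2u\le K\,d^{n-2}(1+|\log d|)^{\sigma}$ in $\Omega$, $u=0$ on $\p\Omega$ $\Rightarrow$ $|u|\le C\,d\,(1+|\log d|)^{\sigma/n+1}$'' turns $\det D^2u=|u|^{n-2}$ into the recursion $\tau\mapsto\frac{n-2}{n}\tau+1$ on the exponent of the logarithm, whose fixed point is precisely $n/2$.

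For part (ii) I would first record the a priori \emph{linear} lower bound, valid for any nonzero convex $u$ vanishing on $\p\Omega$: taking an interior minimum point $p$ of $u$ and, for $x\in\Omega$, the point $q\in\p\Omega$ at which the ray from $p$ through $x$ leaves $\Omega$, convexity of $u$ together with $u(q)=0$ gives $u(x)\le\frac{|x-q|}{|p-q|}u(p)$, whence $|u(x)|\ge\frac{\|u\|_{L^\infty(\Omega)}}{\diam\Omega}\dist(x,\p\Omega)$. Choosing coordinates so that $\Gamma\subset\{x_n=0\}$ and $\Omega\subset\{x_n>0\}$ (the hyperplane carrying $\Gamma$ supports $\Omega$), and fixing $x_0$ in the interior of $\Gamma$, one has $\dist(\cdot,\p\Omega)=x_n$ on a neighbourhood of $x_0$ in $\overline\Omega$, so there $\det D^2u=|u|^{n-2}\ge c_*\,x_n^{n-2}$ with $c_*=(\|u\|_{L^\infty(\Omega)}/\diam\Omega)^{n-2}$. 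Inscribe in $\Omega$ a ``tent'' $E$ over a small subface of $\Gamma$ compactly contained in its interior (e.g.\ a cone $E=\{\,0<x_n<\eta\,\dist(x',\p\Gamma_0)\,\}$ for $\eta$ small), so that the floor of $\p E$ lies in $\Gamma\subset\p\Omega$ while the rest of $\p E$ lies in $\Omega$. Let $w_*$ be the convex Aleksandrov solution of $\det D^2w_*=c_*\,x_n^{n-2}$ in $E$ with $w_*=0$ on $\p E$. Since $\det D^2u\ge c_*\,x_n^{n-2}=\det D^2w_*$ in $E$ and $u\le0=w_*$ on $\p E$, the comparison principle gives $u\le w_*$ in $E$, hence $|u|\ge|w_*|$ there; part (ii) is thereby reduced to the model lower bound $|w_*(x_0',t)|\ge c(n,\Omega,\Gamma)\,t\,|\log t|^{1/n}$ for $w_*$ along the ridge of the tent.

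The genuinely hard points are these. In part (i) it is not the qualitative improvement of exponents that matters but the \emph{sharp} (linear‑in‑the‑gap) blow‑up of the constant in the barrier estimate as $p\downarrow n-2$; extracting it forces a careful choice of the barrier's profile in the proof of Theorem~\ref{thm1}(ii), and it is exactly this rate that pins down the power $n/2$. In part (ii) the main obstacle is the model lower bound for $w_*$: this is where the logarithm is produced, the underlying reason being the incompatibility, on a convex body with a genuine flat face and curved boundary elsewhere, between convexity and a purely linear boundary profile $w_*(x)\approx-a(x')x_n$ near $\{x_n=0\}$ (with $x=(x',x_n)$) --- for such a profile the Hessian forces $\nabla' a\equiv0$, i.e.\ $a$ constant, which cannot vanish on the remaining slanted part of $\p E$; quantifying this defect against the weight $x_n^{n-2}$ yields the $|\log|^{1/n}$ correction, and in the plane ($n=2$) it reduces to the explicit Monge--Amp\`ere solution on a triangle already recalled in the introduction.
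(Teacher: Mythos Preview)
Your approach to part (i) is genuinely different from the paper's and, if carried out carefully, does yield the sharp exponent $n/2$. The paper sidesteps iteration entirely by writing down, in one stroke, an explicit convex subsolution
\[
w(x)=\frac{x_n}{e^{2n}D}\Big|\log\frac{x_n}{e^{2n}D}\Big|^{\frac{n-2}{2}}(|x'|^2-D^2)
-(1+4D^2)\frac{x_n}{e^{2n}D}\Big|\log\frac{x_n}{e^{2n}D}\Big|^{\frac{n}{2}},
\]
checks $\det D^2w\ge c(n,D)|w|^{n-2}$ by direct computation (Lemma~\ref{logn2lem}), and concludes via the subcritical comparison principle (Lemma~\ref{comp_lem}). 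Your iteration buys a conceptual explanation of where $n/2$ comes from (it is the fixed point of $\tau\mapsto\frac{n-2}{n}\tau+1$, equivalently the cumulative effect of the $1/(1-\alpha)$ blow-up of the barrier constant), while the paper's construction buys a short, self-contained proof with explicit constants. One wrinkle: the barrier you invoke is not literally ``the barrier underlying Theorem~\ref{thm1}(ii)'' (that one is $v_{a,D}$ with $a>1$, i.e.\ $p>n-2$); at each step of your bootstrap you have $\alpha=\gamma_k(n-2)<n-2$ and must instead use the $w_\alpha$ barriers from \eqref{walsub}, whose constant $C_\alpha=(1+2D^2)/[\alpha(1-\alpha)]$ has exactly the linear blow-up you need.

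For part (ii) your reduction is correct and matches the paper up to the point where you introduce the auxiliary solution $w_*$, but then the entire content of the estimate has been deferred to the ``model lower bound'' $|w_*(x_0',t)|\ge c\,t|\log t|^{1/n}$, which you explicitly flag as the main obstacle and do not prove. The paper avoids solving an auxiliary Dirichlet problem altogether: instead of the tent $E$ it works on a box $\Omega_s=\{|x'|<s,\,0<x_n<s\}$ and writes down the \emph{explicit} (not necessarily convex) supersolution
\[
\bar w(x)=g(x_n/(es))(|x'|^2-s^2),\qquad g(t)=t(-\log t)^{1/n}-t,
\]
for which a direct calculation (Lemma~\ref{inflem}) gives $\det D^2\bar w\le C(n,s)x_n^{n-2}$. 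Combined with the linear lower bound you already recorded, one gets $\det D^2(Ru)>\det D^2\bar w$ for large $R$, and since $\bar w=0$ on all of $\partial\Omega_s$ the elementary comparison $Ru\le\bar w$ follows, yielding $|u(z)|\ge|\bar w(z)|/R\ge c\,z_n|\log z_n|^{1/n}$ at $z=(0,z_n)$. In short, the missing ingredient in your argument is precisely this explicit barrier; your heuristic about the incompatibility of a linear profile with convexity is suggestive but does not by itself produce the $|\log|^{1/n}$ rate.
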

It would be interesting to bridge the gap between the exponent $n/2$ and $1/n$ in the upper bound and lower bound in Theorem \ref{pn2}.

\subsection{On the proofs}
We say a few words about the proof of Theorem \ref{thm1}  in Section \ref{Lip_sec}. A key ingredient is the construction in Lemma \ref{Lipsub} of Lipschitz convex subsolutions $v$ to degenerate equations of the form $\det D^2 v \geq |v|^p$ where $p>n-2$ and $v$ is nonpositive on the boundary. 
If $\det D^2 u=M|u|^p$ and $p\geq n$, then $\det D^2 u\leq M\|u\|_{L^{\infty}(\Omega)}^{p-n+1}|u|^{n-1}$, so we can reduce the exponent $p$ to be below $n$, the critical exponent. 
Our interest here is the range
$p\in (n-2, n)$.  From Lemma \ref{Lipsub}, we obtain a direct proof of Theorem \ref{thm1} based on
a comparison principle for degenerate subcritical Monge--Amp\`ere equations $\det D^2 w=|w|^q$ ($0\leq q<n$) in Lemma \ref{comp_lem}.  This lemma allows us to improve the exponent $\beta$ in \eqref{ineqpn2} to $2/(n-p)$ when $p\in (0, n-2)$,  avoiding the iteration argument in  \cite{LDCDS} using $C^{\alpha}$ ($0<\alpha<1$) convex subsolutions of the form 
\[w_{\alpha}(x)=x_n^\alpha (|x'|^2-C_\alpha),\quad C_\alpha>0\] 
on the upper half space $\{x_n>0\}$. These functions were motivated by \cite[Lemma 1]{C} where Caffarelli considered the case $\alpha=2/n$ when $n\geq 3$ and $\alpha\in (0, 1)$ when $n=2$.

\medskip
One cannot take $\alpha\geq 1$ in $w_\alpha$ due to the convexity requirement; see \eqref{walsub}. Our crucial observation from \eqref{walsub} is that we can actually take $\alpha>1$ as long as we change the sign in front of $C_\alpha$, that is, we can look for convex subsolutions of the form $x_n^a (|x'|^2 + A)$ where $a>1$ and $A>0$.  The only issue with this ansatz is that it is always positive. However, this can be handled by subtracting from it a large multiple of $x_n$ to obtain desired globally Lipschitz convex subsolutions \[v(x) =x_n^a (|x'|^2 + A)-Bx_n\] to degenerate equations of the form $\det D^2 v \geq |v|^p$ where $p>n-2$ and $v$ is nonpositive on the boundary. Though simple, this final ansatz has escaped our attention up to now.
Finally,  the Lipschitz subsolutions allow us to obtain optimal boundary estimates for 
the Abreu's equation \cite{Ab} with degenerate boundary data.

\medskip
Throughout, we denote $x\in \R^n$ by $x=(x', x_n)$ and $\R^n_+:=\{x=(x', x_n)\in\R^n: x_n>0\}$. 
We use $\dist(\cdot, E)$ to denote the distance function to a closed set $E\subset\R^n$, $\diam(\Omega)$ for the diameter of a set $\Omega\subset\R^n$, and $|\Omega|$ for the $n$-dimensional Lebesgue measure of an open set $\Omega\subset\R^n$.
We use $C= C(\ast, \cdots, \star)$ to denote a positive
constant $C$ depending on the quantities $\ast, \cdots, \star$
in the parentheses.  In general, $C$ can be computed explicitly, and its value
may change from line to line in a given context.

\medskip
The rest of this note is organized as follows. We prove a comparison principle for degenerate subcritical Monge--Amp\`ere equations in Section \ref{comp_sec}.
The construction of globally Lipschitz subsolutions and applications including the proofs of Theorem \ref{thm1} and Corollaries \ref{W21cor} and \ref{IIScor} will be given in Section \ref{Lip_sec}. 
We prove Theorem \ref{pn2} in Section \ref{pn2_sec}.

\section{A comparison principle for degenerate subcritical Monge--Amp\`ere equations}
\label{comp_sec}
The following comparison principle for degenerate subcritical Monge--Amp\`ere equations is motivated from a uniqueness result in Tso \cite[Proposition 4.1]{Tso}. The supersolution here is not required to be convex. 
\begin{lem}[Comparison principle for degenerate subcritical Monge--Amp\`ere equations]
 \label{comp_lem} Let $p\in [0, n)$, $\delta\in [0,\infty)$, and $\Omega$ be a bounded convex domain in $\R^n$. Let $u, v\in C(\overline{\Omega})\cap C^2(\Omega)$ where 
 \begin{enumerate}
 \item[$\bullet$] $v$ is convex in $\Omega$, $v<0$ in $\Omega$, $v\leq 0$ on $\p\Omega$,  and $v$ is a subsolution in the sense that
 \begin{equation}
 \label{1vsub}
  \det D^{2} v\geq (|v|+\delta)^{p} \h~\text{in} ~\Omega,\end{equation}
 \item[$\bullet$] $u\geq 0$ on $\p\Omega$, and $u$ is a supersolution in the sense that
 \begin{equation}\label{2usup} \det D^{2} u\leq (|u|+\delta)^{p} \h~\text{in} ~\Omega.\end{equation}
 \end{enumerate}
 Then 
 \[u\geq v\quad\text{in }\Omega.\]
 Consequently, if instead of \eqref{1vsub} and \eqref{2usup}, we have
 \begin{equation}
 \label{3uv}
 \det D^2 v\geq K |v|^p \quad\text{and}\quad  \det D^2 u\leq L |u|^p\quad \text{in }\Omega,\end{equation} where $K, L>0$, then 
 \[-u \leq (K/L)^{\frac{1}{p-n}}|v| \quad\text{in }\Omega.\]
  \end{lem}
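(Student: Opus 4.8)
The plan is to prove the comparison statement $u \geq v$ in $\Omega$ by a maximum-principle argument at an interior minimum of $u-v$, and then derive the scaling consequence by a change of variables that normalizes the constants. First I would assume, for contradiction, that $\min_{\overline{\Omega}}(u-v) < 0$. Since $u \geq 0 \geq v$ on $\partial\Omega$, this minimum is attained at some interior point $x_0 \in \Omega$, where $u(x_0) < v(x_0) < 0$, so in particular $|u(x_0)| > |v(x_0)|$. At $x_0$ we have $D(u-v)(x_0) = 0$ and $D^2(u-v)(x_0) \geq 0$, i.e. $D^2 u(x_0) \geq D^2 v(x_0)$ as symmetric matrices. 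The hard part is to turn this matrix inequality into a useful scalar inequality on determinants: since $v$ is convex, $D^2 v(x_0) \geq 0$, hence $D^2 u(x_0) \geq D^2 v(x_0) \geq 0$, and by monotonicity of the determinant on nonnegative symmetric matrices, $\det D^2 u(x_0) \geq \det D^2 v(x_0)$.

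Now I would combine this with the differential inequalities. Using \eqref{2usup} and \eqref{1vsub},
\[
(|u(x_0)| + \delta)^p \geq \det D^2 u(x_0) \geq \det D^2 v(x_0) \geq (|v(x_0)| + \delta)^p.
\]
Since $p \geq 0$, this forces $|u(x_0)| + \delta \geq |v(x_0)| + \delta$ when $p > 0$ — wait, this is automatic and not a contradiction; the point is more delicate because $p < n$, not $p \leq 0$. Let me reorganize: the genuine obstacle, and the reason the subcritical hypothesis $p<n$ is essential, is that the above chain alone does not contradict $|u(x_0)| > |v(x_0)|$. I would instead argue by rescaling $v$: for $t > 1$ consider $v_t := t v$, which still satisfies $v_t \leq 0$ on $\partial\Omega$ and $v_t < 0$ in $\Omega$, and $\det D^2 v_t = t^n \det D^2 v \geq t^n(|v| + \delta)^p \geq (|v_t| + \delta)^p$ provided $t^n (|v|+\delta)^p \geq (t|v| + \delta)^p$, which holds since $t^{n} \geq t^{p} \geq (t|v|+\delta)^p/(|v|+\delta)^p$ using $t>1$, $p<n$, and $t(|v|+\delta)\geq t|v|+\delta$. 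Thus $v_t$ is still a subsolution in the sense of \eqref{1vsub}. Let $t^* := \sup\{ t \geq 1 : u \geq v_t \text{ in }\Omega\}$; this set is nonempty (if the lemma fails at $t=1$ we argue differently, but in fact one shows $u \geq v_t$ for $t$ slightly above $1$ by the touching argument) and I would derive a contradiction by showing that at $t = t^*$, if $t^* < \infty$ and $u \not\geq v_{t^*+\e}$, the graphs of $u$ and $v_{t^*}$ touch at an interior point $x_0$ with $u(x_0) = v_{t^*}(x_0) < 0$, whence $D^2 u(x_0) \geq D^2 v_{t^*}(x_0) \geq 0$ and
\[
(|u(x_0)| + \delta)^p \geq \det D^2 u(x_0) \geq \det D^2 v_{t^*}(x_0) \geq (t^*)^n (|v(x_0)| + \delta)^p > (|v_{t^*}(x_0)| + \delta)^p = (|u(x_0)|+\delta)^p,
\]
using $t^* \geq 1$, $p < n$, and $t^*(|v(x_0)|+\delta) \geq |v_{t^*}(x_0)| + \delta$; this strict inequality is the contradiction, forcing $t^* = \infty$, which is absurd since $v_t \to -\infty$ pointwise in $\Omega$ while $u$ is fixed — unless $v \equiv 0$, which is excluded by $v < 0$ in $\Omega$. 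The resolution is that the only way to avoid the contradiction is $u \geq v$ throughout, as claimed. (I would phrase this cleanly as: $\{t \geq 1 : u \geq v_t\}$ is both open and closed in $[1,\infty)$ and bounded above, hence empty or all of a point-set argument; the cleanest writeup compares $u$ with $v$ directly via the touching point and notes $\det$ monotonicity gives a contradiction already when $|u(x_0)|>|v(x_0)|$ because then $(|u(x_0)|+\delta)^p > (|v(x_0)|+\delta)^p$ is consistent, so one really does need the rescaling to break the tie.)

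Finally, for the consequence: given \eqref{3uv} with constants $K, L > 0$, set $\mu := (K/L)^{1/(n-p)} > 0$ and consider $\tilde v := \mu^{\,?} v$ — more precisely, rescale so that the dilation of $v$ by an appropriate factor $\sigma$ (chosen so $\sigma^n K = \sigma^p \cdot(\text{target})$) becomes a subsolution of $\det D^2(\sigma v) \leq (|\sigma v|)^p$-type paired against $u$ with the \emph{same} right-hand side coefficient $L$. Concretely, with $\delta = 0$: $\det D^2(\sigma v) = \sigma^n \det D^2 v \geq \sigma^n K |v|^p = \sigma^{n-p} K |\sigma v|^p$, so choosing $\sigma^{n-p} = L/K$, i.e. $\sigma = (L/K)^{1/(n-p)} = (K/L)^{1/(p-n)}$, gives $\det D^2(\sigma v) \geq L |\sigma v|^p$, and $\sigma v \leq 0$ on $\partial\Omega$, $\sigma v < 0$ in $\Omega$. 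Applying the already-proven comparison (with $\delta = 0$, $p \in [0,n)$) to the pair $u$ (supersolution of $\det D^2 u \leq L|u|^p$) and $\sigma v$ (subsolution of $\det D^2(\sigma v) \geq L|\sigma v|^p$) yields $u \geq \sigma v$ in $\Omega$, i.e. $-u \leq -\sigma v = \sigma |v| = (K/L)^{1/(p-n)} |v|$, which is the asserted inequality. The only points needing care are that $\sigma v$ inherits strict negativity and the correct boundary sign (immediate since $\sigma>0$), and that the determinant-monotonicity step is valid, which it is because both Hessians in question are nonnegative at any interior touching point.
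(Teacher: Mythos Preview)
Your scaling argument has a genuine gap in two places. First, the direction is backwards: since $v<0$ in $\Omega$, the function $v_t = tv$ becomes \emph{more} negative as $t$ increases, so the inequality $u\ge v_t$ becomes \emph{easier}, not harder. Hence $\{t\ge 1: u\ge v_t\}$ is upward-closed (once it holds for $t_0$ it holds for all $t\ge t_0$), its supremum is always $+\infty$, and the claim that $t^*=\infty$ is ``absurd since $v_t\to-\infty$'' is false---$u\ge(\text{something}\to-\infty)$ is trivially satisfied. The relevant quantity is $t^*:=\inf\{t>0: u\ge tv\}=\sup_\Omega(u/v)$, which exceeds $1$ under the contradiction hypothesis.

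Second, and more seriously, even with the correct $t^*$ you cannot guarantee an interior touching point. You need $u-t^*v$ to attain its minimum $0$ at some $x_0\in\Omega$ so that $D^2u(x_0)\ge t^* D^2v(x_0)$. But the hypotheses allow $v=0$ (and $u=0$) on parts of $\partial\Omega$; along a sequence $x_n\to\partial\Omega$ with $u(x_n)/v(x_n)\to t^*$ and both $u,v\to 0$, the minimum of $u-t^*v$ can be achieved only on the boundary while $u-t^*v>0$ throughout $\Omega$. In that case no second-order information is available and your chain of inequalities never gets started. Multiplicative scaling $v\mapsto tv$ cannot repair this because it preserves the zero set of $v$.

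The paper's proof sidesteps this precisely by using a \emph{spatial} dilation $v_\gamma(x):=v(x/\gamma)$ with $\gamma>1$ (after translating so $0\in\Omega$). Convexity gives $\overline\Omega/\gamma\subset\Omega$, hence $v_\gamma<0$ on all of $\overline\Omega$; then the ratio $\eta_\gamma=u/v_\gamma$ is continuous up to the boundary with $\limsup_{\partial\Omega}\eta_\gamma\le 0$, so its maximum (which exceeds $1$) is forced into the interior. At that interior max the identity $D^2u=\eta_\gamma D^2v_\gamma+v_\gamma D^2\eta_\gamma$ yields $D^2u\ge\eta_\gamma D^2v_\gamma$, and subcriticality $p<n$ plus $\gamma\searrow 1$ give the contradiction. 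Your treatment of the rescaling consequence in the final paragraph is essentially correct and matches the paper's.
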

\begin{proof} We first prove $u\geq v$ in $\Omega$ when  \eqref{1vsub} and \eqref{2usup} hold.
  Suppose that  $v-u$ is positive somewhere in $\Omega$.  Note that $v-u\leq 0$ on $\p\Omega$.
By translation of coordinates, we can assume that $0\in\Omega$ satisfies
$v(0)- u(0)=\max_{\overline{\Omega}} (v-u) =\tau>0.$
For $1<\gamma\leq 2$, consider for $x\in\Omega$, \[v_{\gamma}(x)= v(x/\gamma),\quad
\text{and }\eta_{\gamma}(x) = u(x)/v_{\gamma}(x).\] If $\dist(x,\p\Omega)\rightarrow 0$, then $\limsup u(x)\geq 0$, so $\liminf \eta_{\gamma}(x)\leq 0.$
Since $v<0$ in $\Omega$, we have  $$\eta_\gamma(0) = u(0)/v(0) =[v(0)-\tau]/v(0) \geq 1+\e\quad \text{for some }\e>0.$$
Therefore, the function $\eta_{\gamma}$ attains its maximum value in $\overline{\Omega}$ at $x_{\gamma}\in\Omega$ with $\eta_{\gamma}(x_{\gamma})\geq 1+\e.$
At $x=x_{\gamma}$, we have $D\eta_{\gamma} (x_\gamma)=0$, $D^2 \eta_{\gamma }(x_{\gamma})\leq 0$, and $v_{\gamma}(x_{\gamma})< 0$, so we can compute
\[D^2 u(x_\gamma)= \eta_{\gamma} (x_\gamma) D^2 v_{\gamma} (x_\gamma) + D^2 \eta_{\gamma }(x_\gamma) v_{\gamma}(x_\gamma)\geq \eta_{\gamma}(x_\gamma) D^2 v_{\gamma}(x_\gamma),\] in the sense of positive definite matrices. 
Hence, the convexity of $v_\lambda$ and \eqref{1vsub} and \eqref{2usup} give
\begin{equation}
\label{etaD2}
\begin{split}
(|u(x_{\gamma})|+\delta)^p \geq \det D^2 u(x_{\gamma})& \geq [\eta_{\gamma}(x_{\gamma})]^n\det D^2 v_{\gamma}(x_{\gamma}) \\&\geq  [\eta_{\gamma}(x_{\gamma})]^n \gamma^{-2n} (|v(x_{\gamma}/\gamma)|+\delta)^p.
\end{split}
\end{equation}
Using $\eta_{\gamma}(x_{\gamma})\geq 1+\e>1$ and $\delta\geq 0$, we find 
\begin{equation}
\label{etaest}
|u(x_{\gamma})|+\delta = \eta_\gamma(x_\gamma)|v(x_{\gamma}/\gamma)|+\delta\leq \eta_\gamma (x_\gamma) (|v(x_{\gamma}/\gamma)|+\delta).\end{equation}
We deduce from the estimates \eqref{etaD2} and \eqref{etaest} that 
\[1\geq  [\eta_{\gamma}(x_{\gamma})]^{n-p} \gamma^{-2n} \geq (1+\e)^{n-p}\gamma^{-2n}.\]
Letting $\gamma\searrow 1$, using $p<n$ and $\e>0$, we obtain a contradiction. Therefore, $u\geq v$ in $\Omega$.

For the consequence, we rescale and take advantage of the subcriticality. Assume now that instead of \eqref{1vsub} and \eqref{2usup}, we have \eqref{3uv}.
Let $s_K= K^{\frac{1}{p-n}}$ and $s_L= L^{\frac{1}{p-n}}$. Then
\[\det D^2 (s_K v) \geq |s_K v|^p\quad\text{and } \det D^2 (s_L u) \leq |s_L u|^p\quad \text{in }\Omega.\]
Thus, as above for the case $\delta=0$, we have $s_Lu\geq s_Kv$ in $\Omega$. Hence, $-u \leq (K/L)^{\frac{1}{p-n}}|v|$ in $\Omega$, completing the proof of the lemma.
\end{proof}
As an application of Lemma \ref{comp_lem}, we improve upon \eqref{ineqpn2} where $\beta$ is allowed to be $2/(n-p)$.
\begin{prop}
Let $\Omega\subset\R^n$ $(n\geq 3)$ be a bounded convex domain. Let $p\in (0, n-2)$ and $M>0$. Let $u\in C(\overline{\Omega})$ be the nonzero convex solution to 
\[\det D^2 u =M |u|^p\quad\text{in }\Omega, \quad u=0\quad\text{on }\p\Omega. \]
Then
\[ |u(z)|\leq    C(n, p, M, \diam(\Omega))[\dist(z, \p\Omega)]^{\frac{2}{n-p}} \quad\text{for all } z\in\Omega.\]
  \end{prop}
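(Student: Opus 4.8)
The plan is to exhibit an explicit convex subsolution on $\Omega$ with the sharp $[\dist(\cdot,\p\Omega)]^{2/(n-p)}$ boundary decay and then invoke the comparison principle of Lemma~\ref{comp_lem}, with $u$ playing the role of the supersolution. Put $a:=\frac{2}{n-p}$; the hypothesis $p\in(0,n-2)$ is exactly what makes $a\in(0,1)$, and one checks the identity $an-2=ap$, which is the source of the sharpness. On $\R^n_+$ consider
\[
w(x)=x_n^{a}\bigl(|x'|^2-C\bigr),
\]
with $C=C(n,p,\diam(\Omega))>0$ a large constant to be chosen. Expanding the bordered determinant gives
\[
\det D^2 w = 2^{n-1}a\,x_n^{an-2}\bigl((1-a)C-(1+a)|x'|^2\bigr),
\]
while the Schur complement of the $x'$-block shows that $D^2 w\ge 0$ precisely where $(1-a)C\ge(1+a)|x'|^2$.

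First I would fix $z\in\Omega$ and let $x_0\in\p\Omega$ realize $d:=\dist(z,\p\Omega)=|z-x_0|$, so that the open ball $B(z,d)$ is inscribed in $\Omega$ and touches $\p\Omega$ at $x_0$; consequently the hyperplane through $x_0$ orthogonal to $z-x_0$ supports $\Omega$. After a rotation and a translation I may assume $x_0=0$, $z=(0,\dots,0,d)$, and $\Omega\subset\{x_n>0\}$; since $0\in\Omega$, writing $D:=\diam(\Omega)$ we have $\Omega\subset B(0,D)$, hence $|x'|\le D$ and $0<x_n\le D$ on $\Omega$. Choosing $C=C(n,p,D)$ with $(1-a)C\ge 2(1+a)D^2$ makes $w$ convex on $\Omega$, with $w<0$ in $\Omega$, $w\le 0$ on $\p\Omega$, $|w|=x_n^{a}(C-|x'|^2)\le C x_n^{a}$, and $(1-a)C-(1+a)|x'|^2\ge \frac12(1-a)C$ throughout $\Omega$; combining the latter with the determinant formula, with $|w|^p\le C^p x_n^{ap}$, and with the identity $an-2=ap$ yields $\det D^2 w\ge K|w|^p$ in $\Omega$ for an explicit $K=K(n,p,D)>0$.

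It then remains to apply the ``consequently'' part of Lemma~\ref{comp_lem}: $v:=w$ is a convex subsolution of $\det D^2 v\ge K|v|^p$ with $v<0$ in $\Omega$ and $v\le 0$ on $\p\Omega$, while $u$, which is smooth in $\Omega$ by interior regularity and satisfies $\det D^2 u=M|u|^p$, is in particular a supersolution with $\det D^2 u\le M|u|^p$ and $u=0$ on $\p\Omega$. Hence $-u\le (K/M)^{\frac{1}{p-n}}|w|$ in $\Omega$, and evaluating at $z$, where $x_n=d$ and $|w(z)|=Cd^{a}$, gives
\[
|u(z)|=-u(z)\le (K/M)^{\frac{1}{p-n}}C\,d^{a}=C'(n,p,M,\diam(\Omega))\,[\dist(z,\p\Omega)]^{\frac{2}{n-p}}.
\]
Since $z\in\Omega$ was arbitrary, this proves the claim.

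The computations are routine; the only delicate point is to make the single ansatz $w$ serve simultaneously as a subsolution on all of $\Omega$ and with the optimal exponent. This is what pins down both the exponent $a=2/(n-p)$, so that the powers of $x_n$ in $\det D^2 w$ and in $|w|^p$ coincide, and the size of $C$ in terms of $\diam(\Omega)$, so that convexity and the lower bound $(1-a)C-(1+a)|x'|^2\ge\frac12(1-a)C$ hold everywhere on $\Omega$. The geometric observation that the nearest boundary point yields a supporting hyperplane orthogonal to the connecting segment, so that the coordinate $x_n$ equals $\dist(z,\p\Omega)$ exactly at the chosen $z$, is precisely what upgrades the one-sided information $x_n\ge\dist(x,\p\Omega)$ to the desired pointwise bound.
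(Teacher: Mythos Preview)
Your argument is correct and is essentially the same as the paper's: both use the convex subsolution $w(x)=x_n^{a}(|x'|^2-C)$ with $a=\frac{2}{n-p}$ on a half-space after normalizing so that $z$ lies on the $x_n$-axis with $z_n=\dist(z,\p\Omega)$, and then invoke the ``consequently'' part of Lemma~\ref{comp_lem}. The only cosmetic slip is the phrase ``since $0\in\Omega$'': here $0=x_0\in\p\Omega$, but the conclusion $\Omega\subset \overline{B(0,D)}$ (hence $|x'|\le D$, $0<x_n\le D$ on $\Omega$) still follows from $0\in\overline{\Omega}$ and $D=\diam(\Omega)$.
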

  \begin{proof} Note that $u<0$ in $\Omega$. By \cite[Proposition 2.8]{LSNS}, $u\in C^{\infty}(\Omega)$. 
  Let $z=(z', z_n)$ be an arbitrary point in $\Omega$. By translation and rotation of coordinates, we can assume that:  $0\in \p\Omega$,  $\Omega\subset \R^n_+$, 
 the $x_n$-axis points inward $\Omega$, $z$ lies on the $x_n$-axis so $z=(0, z_n)$, and $z_n=\dist (z,\p\Omega)$.  
 Let $D:=\diam(\Omega)$.  Consider for $\alpha =\frac{2}{n-p} \in [\frac{2}{n}, 1)$
\[w_{\alpha}(x)= 
x_n^{\alpha} (|x'|^2 -C_\alpha)\quad\text{where } C_{\alpha}= (1+ 2D^2)/[\alpha(1-\alpha)].
\]
Then,  we find from \cite[Lemma 2.2]{LDCDS} that $w_{\alpha}$ is convex in $\Omega$, $w_{\alpha}< 0$ in $\Omega$, $w_{\alpha}\leq 0$ on $\p\Omega$, and   
\begin{equation}
\label{walsub}
\det D^2w_{\alpha}= 2^{n-1}x_n^{n\alpha-2} [\alpha(1-\alpha) C_\alpha- (\alpha^2+ \alpha) |x'|^2] \geq x_n^{n\alpha-2} \geq C_\alpha^{-\frac{n\alpha-2}{\alpha}} |w_\alpha|^{\frac{n\alpha-2}{\alpha}}\quad \text{in}~\Omega.
\end{equation}
The choice of $\alpha$ gives
\[\det D^2w_{\alpha} \geq C_{\alpha}^{-p}|w_\alpha|^p \quad \text{in}~\Omega.\]
As a consequence of the comparison principle in Lemma \ref{comp_lem}, we have 
\[|u(z)| \leq C(M, \alpha, n, p, D)|w_\alpha(z)| \leq  C(M, n, p, D) z_n^\alpha = C(n, p, M, D)[\dist(z, \p\Omega)]^{\frac{2}{n-p}}.\]
This proves the proposition. 
\end{proof}

\section{Globally Lipschitz subsolutions and applications}
\label{Lip_sec}
In this section, we prove Theorem \ref{thm1} and Corollaries \ref{W21cor} and \ref{IIScor}.

 We first construct globally Lipschitz convex subsolutions to the Monge--Amp\`ere equations $\det D^2 v= C|v|^{p}$ and $\det D^2 v= C[\dist(\cdot, \p\Omega)]^{p}$ where $p>n-2$ with nonpositive boundary values. 
\begin{lem}[Globally Lipschitz convex subsolutions]
\label{Lipsub}
Let $\Omega\subset\R^n_+$ $(n\geq 2)$ be a bounded convex domain with $0\in\p\Omega$. Let $D=\diam(\Omega)$.
For $a>1$, let
\[v(x)=v_{a, D}(x):= x_n^a (|x'|^2 + A) -B x_n,\quad\text{for } x=(x', x_n)\in\overline{\Omega},\]
where
\[A=A(a, D):= \frac{1+ (a+1)D^2}{ a-1},\quad B=B(a, D): = D^{a-1} (A+ D^2).\]
Then $v\in C^{0, 1}(\overline{\Omega})\cap C^{\infty}(\Omega)$, $v$ is convex in $\Omega$, $v\leq 0$ in $\overline{\Omega}$, and
\[\det D^2 v(x)\geq  2^{n-1}a x_n^{na-2} \geq \frac{2^{n-1}a}{B^{na-2}} |v|^{na-2} \quad\text{in }\Omega.\]
\end{lem}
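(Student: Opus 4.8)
The plan is to compute the Hessian of $v$ explicitly in block form and to read off every assertion from that single computation. Throughout, write $x=(x',x_n)$ with $x'=(x_1,\dots,x_{n-1})$, and record the only geometric input that will be used: since $\Omega\subset\R^n_+$ and $0\in\p\Omega$, every $x\in\overline{\Omega}$ satisfies $0\leq x_n\leq D$ and $|x'|\leq D$. Smoothness in $\Omega$ is immediate because $x_n>0$ there; and since $a>1$, the map $x_n\mapsto x_n^a$ is $C^1$ on $[0,D]$ with bounded derivative $a x_n^{a-1}$, so $v\in C^{0,1}(\overline{\Omega})\cap C^{\infty}(\Omega)$.

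Next I would differentiate: $\p_i v=2x_n^a x_i$ for $i<n$, $\p_n v=a x_n^{a-1}(|x'|^2+A)-B$, whence in $\Omega$
\[
D^2 v=\begin{pmatrix} 2x_n^a\, I_{n-1} & 2a x_n^{a-1}\, x' \\ 2a x_n^{a-1}\,(x')^{T} & a(a-1)x_n^{a-2}(|x'|^2+A)\end{pmatrix}.
\]
Taking the Schur complement with respect to the invertible leading block $2x_n^a I_{n-1}$ (legitimate exactly where $x_n>0$, i.e. in $\Omega$), the remaining scalar is, after the cancellation $a(a-1)-2a^2=-a(a+1)$,
\[
x_n^{a-2}\, a\bigl[(a-1)A-(a+1)|x'|^2\bigr].
\]
The value $A=\frac{1+(a+1)D^2}{a-1}$ is chosen precisely so that $(a-1)A-(a+1)|x'|^2\geq (a-1)A-(a+1)D^2=1$ on $\overline{\Omega}$. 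Hence the Schur complement is at least $a x_n^{a-2}>0$ in $\Omega$, so $D^2 v$ is positive definite there — giving the convexity of $v$ in $\Omega$ — and multiplying by $\det(2x_n^a I_{n-1})=2^{n-1}x_n^{a(n-1)}$ yields $\det D^2 v\geq 2^{n-1}a\, x_n^{na-2}$ in $\Omega$.

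It then remains to handle the sign of $v$ and the final inequality, which I would do together. Factoring $v(x)=x_n\bigl[x_n^{a-1}(|x'|^2+A)-B\bigr]$ and using $x_n\leq D$, $|x'|\leq D$ gives $x_n^{a-1}(|x'|^2+A)\leq D^{a-1}(D^2+A)=B$, so $v\leq 0$ on $\overline{\Omega}$; moreover $|v|=Bx_n-x_n^a(|x'|^2+A)\leq Bx_n$. Since $a>1$ and $n\geq 2$ force $na-2>0$, this yields $|v|^{na-2}\leq B^{na-2}x_n^{na-2}$, i.e. $x_n^{na-2}\geq B^{-(na-2)}|v|^{na-2}$, which combined with the determinant bound gives the asserted chain $\det D^2 v\geq 2^{n-1}a\, x_n^{na-2}\geq \frac{2^{n-1}a}{B^{na-2}}|v|^{na-2}$.

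There is no genuine obstacle: the statement is a direct computation, and the only care needed is threading the constants correctly — $A$ is forced by positivity of the Schur complement (equivalently the determinant lower bound), $B$ is then forced by the requirement $v\leq 0$, and the last inequality is free once $|v|\leq Bx_n$ is observed. The only slightly delicate point is that the block/Schur-complement manipulation is valid only where $x_n>0$, but that is exactly the region $\Omega$ where convexity and the differential inequality are claimed, while the bound $v\leq 0$ is checked directly on all of $\overline{\Omega}$.
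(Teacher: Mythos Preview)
Your proof is correct and follows essentially the same approach as the paper: both compute the Hessian explicitly, arrive at the identical formula $\det D^2 v = 2^{n-1}a\,x_n^{na-2}\bigl[(a-1)A-(a+1)|x'|^2\bigr]$, and use the choice of $A$ to bound the bracket below by $1$. The only cosmetic difference is that the paper evaluates the determinant ``by induction'' (cofactor expansion) and invokes positivity of the leading principal minors, whereas you use the Schur complement of the $2x_n^a I_{n-1}$ block---these are equivalent for this block structure, and your version has the slight expository advantage of delivering positive definiteness and the determinant formula in one stroke.
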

\begin{proof} It is clear that $v\in C^{0, 1}(\overline{\Omega})\cap C^{\infty}(\Omega)$.
We have
\begin{equation*}
 D^2 v (x)=
  \begin{pmatrix}
  2x^a_n & 0 & \cdots & 0 & 2ax_1x_n^{a-1} \\
  0 & 2x^a_n & \cdots & 0 & 2ax_2x_n^{a-1} \\
  \vdots  & \vdots  & \ddots & \vdots & \vdots  \\
  0 & 0 & \cdots & 2x^a_n &  2ax_{n-1}x_n^{a-1}\\
  2ax_1 x_n^{a-1} & 2ax_2 x_n^{a-1} & \cdots & 2ax_{n-1}x_n^{a-1} & a(a-1) x_n^{a-2} (|x'|^2+ A) 
 \end{pmatrix}.
\end{equation*}
Note that the first $(n-1)$-leading principal minors of $D^2 v$ are all positive in $\Omega$.
By induction, we find that 
\begin{equation}
\label{vsub}
\begin{split}\det D^2 v(x)&= 2^{n-1} a(a-1) x_n^{na-2} (|x'|^2 + A)- 2^n a^2 |x'|^2 x_n^{na-2}\\ &=  2^{n-1}a x_n^{na-2}\big[ A(a-1) - (a+ 1)|x'|^2\big].\end{split}
\end{equation}
With the choices of $A$ and $B$,  we have $v$ is convex in $\Omega$, and $v\leq 0$ in $\overline{\Omega}$. 
Clearly, \[A(a-1) - (a+ 1)|x'|^2 \geq A(a-1)-(a+ 1)D^2\geq 1  \quad\text{in }\Omega,\] so \eqref{vsub} gives
\[\det D^2 v(x)\geq  2^{n-1}a x_n^{na-2}\geq \frac{2^{n-1}a}{B^{na-2}} |v|^{na-2}  \quad\text{in }\Omega.\]
The lemma is proved.
\end{proof}

\begin{proof}[Proof of Theorem \ref{thm1}]

Let $z=(z', z_n)$ be an arbitrary point in $\Omega$. By translation and rotation of coordinates, we can assume that:  $0\in \p\Omega$,  $\Omega\subset \R^n_+$, 
 the $x_n$-axis points inward $\Omega$, $z$ lies on the $x_n$-axis so $z=(0, z_n)$, and $z_n=\dist (z,\p\Omega)$.  
 
\medskip
We prove part (i). Note that the function $u$ in this part satisfies $u\in C^{\infty}(\Omega)$ by \cite[Proposition 2.8]{LSNS}. We  need to show that \begin{equation}
\label{uLip}
|u(z)| \leq C(p, n, M, |\Omega|,\diam(\Omega)) \dist (z,\p\Omega)= C(p, n, M, |\Omega|,\diam(\Omega)) z_n. \end{equation}

  First, we consider the case $n-2<p<n$. Let 
 \begin{equation}
 \label{aDv}
 a= \frac{2+ p}{n}>1, \quad D=\diam (\Omega), \quad v=v_{a, D},\end{equation}
 where $v_{a, D}(x)= x_n^a (|x'|^2 + A) -B x_n$ is the convex function in Lemma \ref{Lipsub}. Recall that \[\det D^2 v\geq c(a, n, D)|v|^{na-2}= c(a, n, D)|v|^p,\,\,  \det D^2 u\leq M|u|^p\quad \text{in }\Omega;\,\, v\leq u=0 \quad \text{on }\p\Omega.\]
 From the consequence of the comparison principle in Lemma \ref{comp_lem}, we have 
\[|u|\leq C(a, D, n, p, M)|v_{a, D}| \quad\text{in }\Omega.\] 
 Since $|v(z)|= |v_{a, D}(z)|\leq C(a, D)z_n$,
 we obtain \eqref{uLip}.
 
 \medskip
   Now, we consider the case $p\geq n$. If $p=n$, then by homogeneity, we can assume that $\|u\|_{L^{\infty}(\Omega)}=1$.  If $p>n$, we use \eqref{upinfty} to estimate $\|u\|_{L^{\infty}(\Omega)}$. In all cases, we have
  \[\det D^2 u =M|u|^p \leq M \|u\|^{p-n+1}_{L^{\infty}(\Omega)} |u|^{n-1}\leq C(M, n, p, |\Omega|) |u|^{n-1}\quad\text{in }\Omega,\]
  and \eqref{uLip} follows from the case $p=n-1$.  
  Part (i) is proved.
  
  \medskip
  Now, we prove part (ii). We  need to show that \begin{equation}
\label{uLip2}
|u(z)| \leq C(p, n, M, \diam(\Omega)) \dist (z,\p\Omega)= C(p, n, M, \diam(\Omega)) z_n. \end{equation}
Let \[a= (2+ p)/n>1, D=\diam (\Omega), \quad \text{and }v(x)=v_{a, D}(x)= x_n^a (|x'|^2 + A) -B x_n\] be the convex function in Lemma \ref{Lipsub}. Let $s= (M2^{1-n}/a)^{1/n}$. From Lemma \ref{Lipsub}, we have
\[\det D^2 (sv)= s^n \det D^2 v \geq s^n  2^{n-1}a x_n^{na-2} =Mx_n^p. \]
Thus
\[\det D^2 (sv) \geq M[ \dist (\cdot,\p\Omega)]^p \geq  \det D^2 u \quad \text{in }\Omega. \]
Note that $sv\leq u=0$ on $\p\Omega$.
By the comparison principle for the Monge--Amp\`ere equation (see \cite[Theorem 3.21]{Lbook}), we have 
\[sv\leq u \quad \text{in }\Omega.\]
In particular, at $z=(0, z_n)$, we have
\[|u(z)\leq |s v(z)| \leq sB(a, D) z_n.\]
This implies \eqref{uLip2}, completing the proof of the theorem.
  \end{proof}
  
  \begin{proof} [Proof of Corollary \ref{W21cor}]
By the global Lipschitz estimates in Theorem \ref{thm1}(i) and the convexity of $u$, we have
\begin{equation}
\label{Dubd}
\|Du\|_{L^{\infty}(\Omega)} \leq C(|\Omega|, \diam(\Omega), n, p, M) \|u\|_{L^{\infty}(\Omega)}.\end{equation}
The convexity of $u$ also implies that
\[\|D^2 u\| \leq \Delta u\quad\text{in }\Omega.  \]
 Let $\{\Omega_m\}_{m=1}^{\infty}\subset \Omega$ be a sequence of smooth convex domains that converges to $\Omega$ in the Hausdorff distance.  
Let $\mathcal{H}^{n-1}$ denotes the $(n-1)$-dimensional Hausdorff measure. 

For each $m$, integrating by parts gives
\[\int_{\Omega_m} \|D^2 u\|\, dx \leq \int_{\Omega_m} \Delta u\, dx =\int_{\p\Omega_m} \frac{\p u}{\p\nu}\, d \mathcal{H}^{n-1} \leq \mathcal{H}^{n-1} (\p\Omega_m) \|Du\|_{L^{\infty}(\Omega)},\]
where $\nu$ is the outer normal unit vector field on $\p\Omega_m$.

From \eqref{Dubd}, we easily obtain for all $m$
\[\int_{\Omega_m} \|D^2 u\|\, dx \leq  C(|\Omega|, \diam(\Omega), n, p, M) \|u\|_{L^{\infty}(\Omega)}.\]
Now, we let $m\to\infty$. The monotone convergence theorem then gives $D^2 u\in L^1(\Omega)$ with estimate for $\|D^2 u\|_{L^1(\Omega)}$ stated in the corollary. 
\end{proof}
  
  \begin{proof}[Proof of Corollary \ref{IIScor}] The interior regularity of the scheme $u_k\in C^{2k-2}(\Omega)$ for $k\geq 2$ follows from \cite[Proposition 3.1]{LArx}. In view of Step 1 in the proof of \cite[Theorem 1.4]{LArx}, the sequence $\{R(u_k)\}_{k=0}^{\infty}$ is bounded: 
  \begin{equation}\label{Rbd}R(u_k) \leq C(u_0, n,\Omega)\quad\text{for all } k\geq 0.\end{equation}
  The finiteness of $R(u_0)$ implies that of $\|u_0\|_{L^{n+1}(\Omega)}$. Hence
  \[\int_\Omega \det D^2 u_{1}\,dx =R(u_0)\int_\Omega |u_0|^n\, dx \leq C(u_0, n,\Omega). \] 
  As a consequence of the Aleksandrov estimate (see \cite[Theorem 3.12]{Lbook}), we have 
  \begin{equation}\label{1nH}|u_1(x)| \leq C(u_0, n,\Omega) [\dist(x,\p\Omega)]^{\frac{1}{n}}\quad\text{for all } x\in\Omega.\end{equation}
 \medskip
 {\bf Step 1.} We will show by induction that if $1\leq k\leq n/2$, then 
    \begin{equation}
    \label{2k1nH}
    |u_k(x)| \leq C(u_0, n,\Omega) [\dist(x,\p\Omega)]^{\frac{2k-1}{n}}\quad\text{for all } x\in\Omega.\end{equation}
    
    When $k=1$,  \eqref{2k1nH} is exactly \eqref{1nH}. Assume \eqref{2k1nH} holds for $1\leq k\leq (n-2)/2$ (if $n\leq 3$, then we are done). We prove it for $k+1$.  Indeed, recalling \eqref{Rbd} and \eqref{2k1nH}, we have
    \begin{equation}
    \label{uk1bd}
    \det D^2 u_{k+1} = R(u_k) |u_k|^n \leq C_k(u_0, n,\Omega ) [\dist(\cdot,\p\Omega)]^{2k-1}.\end{equation}
    Let $z=(z', z_n)$ be an arbitrary point in $\Omega$. By translation and rotation of coordinates, we can assume that:  $0\in \p\Omega$,  $\Omega\subset \R^n_+$, 
 the $x_n$-axis points inward $\Omega$, $z$ lies on the $x_n$-axis so $z=(0, z_n)$, and $z_n=\dist (z,\p\Omega)$.  
 Let $D:=\diam(\Omega)$.  Consider for $\alpha =\frac{2k+1}{n} \in [\frac{3}{n}, 1)$
\[w_{\alpha}(x)= 
x_n^{\alpha} (|x'|^2 -C_\alpha)\quad\text{where } C_{\alpha}= (1+ 2D^2)/[\alpha(1-\alpha)].
\]
Then,   $w_{\alpha}$ is convex in $\Omega$, $w_{\alpha}\leq 0$ on $\p\Omega$, and recalling \eqref{walsub}, we have 
\[
\det D^2w_{\alpha} \geq x_n^{n\alpha-2} \geq [\dist(\cdot,\p\Omega)]^{2k-1} \quad \text{in}~\Omega.
\]
It follows from \eqref{uk1bd} that 
\[\det D^2(C_k^{1/n}w_{\alpha}) \geq C_k[\dist(\cdot,\p\Omega)]^{2k-1}\geq \det D^2 u_{k+1} \quad \text{in}~\Omega.\]
The comparison principle for the Monge--Amp\`ere equation (see \cite[Theorem 3.21]{Lbook}) gives
\[C_{k}^{1/n} w_\alpha \leq u_{k+1} \quad \text{in }\Omega.\]
In particular, at $z=(0, z_n)$, we have
\[|u_{k+1}(z)|\leq C_{k}^{1/n} |w_\alpha(z)| \leq C(u_0, n, \Omega)z^{\frac{2k+1}{n}}_n= C(u_0, n,\Omega)  [\dist(z,\p\Omega)]^{\frac{2k+1}{n}}.\]
The arbitrariness of $z$ proves \eqref{2k1nH} for $k+1$. Therefore,   \eqref{2k1nH} holds for all $1\leq k\leq n/2$.

\medskip
\noindent
{\bf Step 2.} We now deduce the corollary from \eqref{2k1nH}. Indeed, for $m:= \lfloor n/2 \rfloor$, using \eqref{Rbd} and \eqref{2k1nH}, we find
\[\det D^2 u_{m+1} = R(u_{m})|u_{m}|^n \leq C [\dist(\cdot,\p\Omega)]^{2m-1} \leq  C(u_0, n,\Omega) [\dist(\cdot,\p\Omega)]^{n-5/2}.\]
The proof of \eqref{2k1nH} in fact shows that  if $\det D^2 u_k \leq C[\dist(\cdot,\p\Omega)]^{\alpha}$ where $\alpha < n-2$,  then the modulus of $u_{k}$ grows less than  $C[\dist(\cdot,\p\Omega)]^{\frac{2+\alpha}{n}}$.  Thus, we have
\[|u_{m+1}| \leq C(u_0, n,\Omega) [\dist(\cdot,\p\Omega)]^{\frac{n-1/2}{n}} \leq   C(u_0, n,\Omega) [\dist(\cdot,\p\Omega)]^{\frac{n-1}{n}} \quad\text{in }\Omega.\]
Therefore
\[\det D^2 u_{m+2} = R(u_{m+1})|u_{m+1}|^n \leq C [\dist(\cdot,\p\Omega)]^{n-1}. \]
By Theorem \ref{thm1}(ii) for $p=n-1$, we have $u_{m+2}\in C^{0, 1}(\overline{\Omega})$ with the estimate
\[|u_{m+2}| \leq C(u_0, n,\Omega) \dist(\cdot,\p\Omega) \quad\text{in }\Omega.\]
Using \eqref{IIS} and Theorem \ref{thm1}(ii) repeatedly, we obtain $u_{k}\in C^{0, 1}(\overline{\Omega})$ for all $k\geq m+ 2 = \lfloor (n+ 4)/2 \rfloor$. The corollary is proved.
  \end{proof}
  We indicate an application of the Lipschitz subsolution in Lemma \ref{Lipsub} in obtaining optimal boundary estimates for 
the Abreu's equation \cite{Ab} with degenerate boundary data which arises in the study of the existence of constant scalar curvature K\"ahler metrics for toric varieties \cite{D1, D2}; see also  \cite{CLS} for related 
Abreu's equation with degenerate boundary data.

\begin{thm} [Boundary Lipschitz estimates for the inverse of the Hessian determinant of Abreu's equation with degenerate boundary data]
\label{wphi}
Let $\Omega\subset\R^n$ be a bounded convex domain. Let $u\in C^{4}(\Omega)$ be a locally uniformly convex solution of 
\begin{equation}
\label{AMCE}
\left \{
\begin{alignedat}{2}
U^{ij}D_{ij}w&=-f&&\quad \text{in}~\Omega,\\
w &= [\det D^{2} u]^{-1}&&\quad \text{in}~\Omega,\\
w &= 0 &&\quad \text{on}~\p\Omega,
\end{alignedat}
\right.
\end{equation}
where $U = (U^{ij})=(\det D^{2} u) (D^{2} u)^{-1}$ is the cofactor matrix of the 
Hessian matrix $D^{2}u$, $f\in L^{\infty}(\Omega)$ with $f^{+}>0$, and $w\in C(\overline{\Omega})$.
 Then, we have the estimate
\begin{equation}
\label{Awlip}
\det D^2 u\geq c(n, \diam (\Omega)) \|f^{+}\|_{L^{\infty}(\Omega)}^{-n} [\dist (\cdot,\p\Omega)]^{-1}\quad\text{in }\Omega.
\end{equation}
\end{thm}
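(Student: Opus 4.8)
The plan is to compare $w = [\det D^2 u]^{-1}$ from below against a suitable supersolution of the linearized Monge--Amp\`ere equation $U^{ij}D_{ij}(\cdot) = -f$, and to feed into this comparison the globally Lipschitz convex subsolution $v = v_{a,D}$ from Lemma \ref{Lipsub}. The observation driving everything is that the linearized operator $U^{ij}D_{ij}$ acts nicely on convex functions $\phi$: since $U = (U^{ij})$ is the cofactor matrix of $D^2 u$, one has $U^{ij}D_{ij}\phi \geq 0$ whenever $\phi$ is convex, and more precisely $U^{ij}D_{ij}\phi \geq n (\det D^2 u)^{1/n}(\det D^2\phi)^{1/n}$ by the matrix arithmetic--geometric mean / Alexandrov inequality for the pairing of two positive semidefinite matrices. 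Thus, fixing a boundary point and reducing as usual to the normalized position $0\in\p\Omega$, $\Omega\subset\R^n_+$, the $x_n$-axis pointing inward, I would take $a = (2+p)/n$ with $p=n-1$, i.e. $a = (n+1)/n>1$, so that Lemma \ref{Lipsub} produces a convex $v=v_{a,D}\in C^{0,1}(\overline\Omega)\cap C^\infty(\Omega)$ with $v\leq 0$ on $\overline\Omega$, $v=0$ on $\p\Omega$ in the normalized direction, and $\det D^2 v \geq c(n,D)\, x_n^{\,na-2} = c(n,D)\, x_n^{\,n-1}$.

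Next I would build the barrier. Since $f\in L^\infty$, the function $\psi(x) := -\tfrac{1}{2}\|f^+\|_{L^\infty(\Omega)}\,(R^2 - |x - x_0|^2)$ for a ball $B_R(x_0)\supset\Omega$ is smooth and convex with $D^2\psi = \|f^+\|_{L^\infty}\,\mathrm{Id}$, hence $U^{ij}D_{ij}\psi = \|f^+\|_{L^\infty}\,\trace U = \|f^+\|_{L^\infty}\,(\det D^2 u)\,\trace((D^2 u)^{-1}) \geq \|f^+\|_{L^\infty} \cdot n(\det D^2 u)^{(n-1)/n}\cdot(\det D^2 u)^{?}$ — more simply, one uses $\trace U \geq n(\det U)^{1/n} = n(\det D^2 u)^{(n-1)/n} \geq 0$, so $U^{ij}D_{ij}\psi \geq 0 \geq -f$ only needs care about the sign of $f$; to handle $f$ of either sign, I would instead combine this with the equation directly: set $h := w + \psi$ and compute $U^{ij}D_{ij} h = -f + U^{ij}D_{ij}\psi$. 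The cleaner route, which I expect to be the intended one, is to compare $w$ against $\lambda\, v$ for a constant $\lambda>0$: since $v$ is convex, $U^{ij}D_{ij}(\lambda v) \geq n\lambda (\det D^2 u)^{1/n}(\det D^2 v)^{1/n} \geq n\lambda\, c(n,D)^{1/n} (\det D^2 u)^{1/n} x_n^{(n-1)/n}$. Meanwhile on $\p\Omega$ we have $w = 0 = v$. So the function $g := w - \lambda v$ satisfies $g\geq 0$ on $\p\Omega$ and $U^{ij}D_{ij} g \leq -f - n\lambda c(n,D)^{1/n}(\det D^2 u)^{1/n}x_n^{(n-1)/n}$. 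This is where I would absorb $f$: bounding $\det D^2 u = w^{-1}$ below by the very estimate I am trying to prove would be circular, so instead I would first obtain a preliminary bound $\det D^2 u \geq c_0 [\dist(\cdot,\p\Omega)]^{-1}$ away from equality constants by a simpler argument (or note it suffices to prove the estimate only where $\dist(\cdot,\p\Omega)$ is small, and there one can bootstrap), then choose $\lambda$ comparable to $\|f^+\|_{L^\infty}^{?}$ so that the right-hand side is $\leq 0$, i.e. $U^{ij}D_{ij} g \leq 0$ in $\Omega$; the minimum principle for the degenerate elliptic operator $U^{ij}D_{ij}$ (which applies since $U\geq 0$) then forces $g\geq 0$, i.e. $w \geq \lambda v$ in $\Omega$.

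Finally I would convert $w\geq \lambda v$ into the claimed estimate. Evaluating at a point $z=(0,z_n)$ on the inward normal with $z_n = \dist(z,\p\Omega)$, Lemma \ref{Lipsub} (together with the explicit form $v = x_n^a(|x'|^2+A)-Bx_n$, so $|v(z)| \leq B(a,D)\,z_n$, and a matching lower bound $|v(z)| \geq c(a,D)\,z_n$ for $z_n$ small — coming from $v(z) = -B z_n + A z_n^a$ with $a>1$) gives $w(z) \geq \lambda|v(z)| \geq c\,\lambda\, z_n$... no: the inequality $w\geq\lambda v$ with $v\leq 0$ gives only $w\geq\lambda v$, which is an \emph{upper} bound on $-w$, the wrong direction. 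So I must reverse roles: I want $w$ \emph{small} near the boundary, i.e. $\det D^2 u = 1/w$ \emph{large}, so I should bound $w$ from \emph{above} by $\lambda|v|$, meaning I need $w \leq -\lambda v$, i.e. $-\lambda v$ is a supersolution lying above $w$. This works because $-v\geq 0$ is \emph{concave} in the relevant directions only near the boundary; the correct object is: $w$ satisfies $U^{ij}D_{ij}w = -f$ with $w=0$ on $\p\Omega$, and the standard Aleksandrov--Bakelman--Pucci / barrier estimate for $U^{ij}D_{ij}$ with the degenerate weight gives $w(x) \leq C\|f^+\|_{L^\infty}\, (\text{barrier})$; taking the barrier to be $-\lambda v$ with $v$ from Lemma \ref{Lipsub} and $\lambda$ chosen so $U^{ij}D_{ij}(-\lambda v) \leq -\|f^+\|_{L^\infty} \leq -f$, which needs $\lambda\, U^{ij}D_{ij}v \geq \|f^+\|_{L^\infty}$; but $U^{ij}D_{ij}v\geq n(\det D^2 u)^{1/n}(\det D^2 v)^{1/n}$ again involves $\det D^2 u$, so one rewrites using $w = (\det D^2 u)^{-1}$: we need $\lambda\, n\, w^{-1/n} c(n,D)^{1/n} x_n^{(n-1)/n} \geq \|f^+\|_{L^\infty}$, i.e. (using the yet-to-be-proven claim is circular) — the honest resolution is that $w\leq -\lambda v$ and $-\lambda v \leq \lambda B z_n$ combine with $U^{ij}D_{ij}v \geq n w^{-1/n}(\det D^2 v)^{1/n}$ to give a \emph{differential inequality} for $w$ which, chased to the boundary, yields $w \leq C\|f^+\|_{L^\infty}^{n/(n-1)}\cdots$; this bootstrapping to pin down the exact power $\|f^+\|_{L^\infty}^{-n}$ and the exponent $-1$ on $\dist$ is the step I expect to be the main obstacle, and it is exactly the point at which Lemma \ref{Lipsub}'s sharp exponent $na-2 = n-1$ (achieved by the ansatz $a = (n+1)/n$) is used decisively, since that is what makes $w \lesssim \dist$ and hence $\det D^2 u \gtrsim \dist^{-1}$ with no loss.
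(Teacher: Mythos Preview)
Your overall strategy---compare $w$ with a multiple of $-v_{a,D}$ using $\trace(U D^2 v)\ge n(\det U)^{1/n}(\det D^2 v)^{1/n}$, then invoke the maximum principle to get $w\le C|v_{a,D}|\le C\,\dist(\cdot,\p\Omega)$---is exactly the paper's, and you correctly isolate the circularity: the inequality $U^{ij}D_{ij}(-\lambda v)\le -\|f^+\|_{L^\infty}$ requires a \emph{lower} bound on $\det D^2 u$, which is the conclusion. You do not, however, resolve this circularity; vague appeals to ``bootstrap'' or to ``a simpler argument'' do not work, because there is no way to get any lower bound on $\det D^2 u$ from the equation alone without some quantitative input. (A small slip: $\det U=(\det D^2 u)^{n-1}$, so the trace inequality gives $(\det D^2 u)^{(n-1)/n}$, not $(\det D^2 u)^{1/n}$.)

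The paper breaks the circle by importing a \emph{weaker} preliminary estimate $\det D^2 u\ge c_1(n,D)[\dist(\cdot,\p\Omega)]^{-(n-1)/n}$ from earlier work (\cite[Theorem~1.6]{LCPAA}), and then \emph{chooses $a$ to match it}: one needs $(\det D^2 u)^{(n-1)/n}(\det D^2 v_{a,D})^{1/n}\ge c$ uniformly, i.e.\ $-\tfrac{(n-1)^2}{n^2}+\tfrac{na-2}{n}= 0$, which forces $a=\tfrac{2}{n}+\tfrac{(n-1)^2}{n^2}$. Your choice $a=(n+1)/n$ gives $na-2=n-1$ and hence a residual factor $[\dist]^{(n-1)/n^2}$, which \emph{vanishes} at the boundary and so cannot dominate $\|f^+\|_{L^\infty}$ there. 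After normalizing $\|f^+\|_{L^\infty}=1$ (via the rescaling $u\mapsto \|f^+\|_{L^\infty}u$, which explains the final power $-n$), the paper's choice of $a$ yields $U^{ij}D_{ij}(-Cv_{a,D})\le -1\le -f$ for $C=C(n,D)$ large, whence $w\le -Cv_{a,D}\le C(n,D)\dist(\cdot,\p\Omega)$ by the maximum principle, giving \eqref{Awlip}.
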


We refer the readers to \cite[Section 2]{D2} for a discussion of 
the optimality of \eqref{Awlip} when $\Omega$ is a polytope.

\begin{proof}
Note that for $\gamma>0$, $\tilde u = \gamma u$ satisfies
\begin{equation}
\label{u_res}
\tilde U^{ij} D_{ij} \tilde w = -\gamma^{-1} f,\end{equation}
where $\tilde U=(\tilde U^{ij})$ is the cofactor matrix of $D^2 \tilde u$, and $\tilde w = (\det D^2 \tilde u)^{-1}$. Thus, 
by considering $\|f^{+}\|_{L^{\infty}(\Omega)} u$ instead of $u$, it suffices to prove the theorem under the assumption that \[\|f^{+}\|_{L^{\infty}(\Omega)}=1.\]

Let $D:=\diam(\Omega)$. 
By
\cite[Theorem 1.6]{LCPAA}, we already have
\begin{equation}\det D^2 u(x)\geq c_1(n,D)  [\dist(x,\p\Omega)]^{- \frac{n-1}{n}}  \quad\text{for all } x\in\Omega.
\label{wstep2}
\end{equation}
Let $z=(z', z_n)$ be an arbitrary point in $\Omega$. We prove
\begin{equation}\det D^2 u(z)\geq 
c(n,D) [\dist(z,\p\Omega)]^{-1}.
\label{wstep2b}
\end{equation}
By translation and rotation of coordinates, we can assume that:  $0\in \p\Omega$,  $\Omega\subset \R^n_+$, 
 the $x_n$-axis points inward $\Omega$, $z$ lies on the $x_n$-axis so $z=(0, z_n)$, and $z_n=\dist (z,\p\Omega)$.

 To prove (\ref{wstep2b}), we will compare $w$ with $-C(a, D) v_{a, D}$ where \[a= \frac{2}{n} +  \frac{(n-1)^2}{n^2}>1,\] $v_{a, D}$ is the convex function in Lemma \ref{Lipsub}, and $C(a, D)>0$ is to be chosen.
 For this,
  we use
the fact that for $n\times n$ positive definite matrices $A$ and $B$, \[\text{trace}(AB)\geq n (\det A)^{1/n} (\det B)^{1/n}.\] 
Then, using (\ref{wstep2}) together with \[\det (U^{ij})= (\det D^2 u)^{n-1} \quad\text{and}\quad \det D^2 v_{a, D}\geq 2^{n-1}a  [\dist(x,\p\Omega)]^{na-2},\] we find that
\begin{eqnarray*}U^{ij} D_{ij}(-Cv_{a, D})& \leq& - Cn  (\det D^2 u)^{\frac{n-1}{n}} (\det D^2 v_{a, D})^{\frac{1}{n}}\\ &\leq& -C c_2(n, D)  [\dist(x,\p\Omega)]^{-\frac{(n-1)^2}{n^2}} [\dist(x,\p\Omega)]^{\frac{na-2}{n}}\\
&=&-  C c_2 (n, D)   <-1\leq  -f = U^{ij} D_{ij} w,
\end{eqnarray*}
if $C= C (n,D) $ is large. 

Since \[-U^{ij} D_{ij}(w+  C v_{a, D}) <0\quad \text{in } \Omega,\quad \text{and } w+  C v_{a, D} \leq 0 \quad \text{on } \p\Omega,\]
the classical comparison principle implies
\[w + C(a, D) v_{a, D}\leq 0 \quad\text{in }\Omega.\]
Therefore, at $z=(0, z_n)$, we have
 \[w(z) \leq -C(n,D) v_{a, D}(z)\leq  C(n,D) z_n =C(n,D) \dist(z,\p\Omega).\]
 This gives (\ref{wstep2b})  because $\det D^2 u(z)=[w(z)]^{-1}$.  The theorem is proved.
\end{proof}

  \section{Infinite boundary gradient}
  \label{pn2_sec}
  In this section, we prove Theorem \ref{pn2} using suitable subsolutions and supersolutions of the Monge--Amp\`ere equation  $\det D^2 u =|u|^{n-2}$.

  First, we construct subsolutions with $\dist(\cdot,\p\Omega)(1+ |\log \dist(\cdot,\p\Omega)|^{n/2})$ growth.
  \begin{lem}[Subsolutions]
  \label{logn2lem}
Assume that $0\in \p\Omega$ and $\Omega\subset \R^n_+$ ($n\geq 2$).
Let $D:=\diam(\Omega)$ and
\[ w(x):= \frac{x_n}{e^{2n} D} \bigg |\log \frac{x_n}{e^{2n} D}\bigg |^{\frac{n-2}{2}}\bigg(|x'|^2-D^2\bigg) -(1+ 4D^2)  \frac{x_n}{e^{2n} D} \bigg |\log \frac{x_n}{e^{2n} D}\bigg |^{\frac{n}{2}},\quad x\in\overline{\Omega}.\]
Then $w\in C^{\infty}(\Omega)$, $w$ is convex in $\Omega$ with $w\leq 0$ on $\p\Omega$, and there exists a constant $c=c(n,\diam(\Omega))>0$ such that 
\[\det D^2 w\geq c |w|^{n-2}\quad \text{in }\Omega.\]
\end{lem}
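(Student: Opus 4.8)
The plan is to verify the three claimed properties of $w$ — smoothness, convexity with nonpositive boundary values, and the differential inequality $\det D^2 w \geq c|w|^{n-2}$ — by direct computation, treating the logarithmic factor as a slowly-varying perturbation of the power-type ansatz in Lemma \ref{Lipsub}. First I would introduce the abbreviations $t := x_n/(e^{2n}D)$ and $L := |\log t|$, noting that on $\overline\Omega$ we have $0 \leq x_n \leq D$, hence $0 < t \leq e^{-2n}$, so $\log t < -2n < 0$, giving $L = -\log t \geq 2n$; in particular $L$ is bounded below by a large constant, which will absorb the error terms coming from differentiating $L$. With this notation $w = t\,L^{(n-2)/2}(|x'|^2 - D^2) - (1+4D^2)\,t\,L^{n/2}$. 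Smoothness in $\Omega$ is clear since $t > 0$ there and $L = -\log t$ is smooth and positive.

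The main work is the Hessian computation. Writing $w = \phi(x_n)(|x'|^2 - D^2) - \psi(x_n)$ with $\phi(x_n) = t L^{(n-2)/2}$ and $\psi(x_n) = (1+4D^2)\,t\,L^{n/2}$, the matrix $D^2 w$ has the same block structure as in Lemma \ref{Lipsub}: the upper-left $(n-1)\times(n-1)$ block is $2\phi(x_n)\,I_{n-1}$, the mixed entries are $2\phi'(x_n)x_i$, and the bottom-right entry is $\phi''(x_n)(|x'|^2 - D^2) - \psi''(x_n)$. Expanding the determinant exactly as there gives
\[
\det D^2 w = (2\phi)^{n-1}\Big[\phi''(|x'|^2 - D^2) - \psi'' - \tfrac{(n-1)(\phi')^2}{\phi}\,|x'|^2\Big].
\]
The key computations are the asymptotics of $\phi', \phi'', \psi''$. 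Since $\frac{d}{dx_n} = \frac{1}{e^{2n}D}\frac{d}{dt}$ and $\frac{d}{dt}L = -1/t$, one finds $\phi' = \frac{1}{e^{2n}D}\big(L^{(n-2)/2} - \tfrac{n-2}{2}L^{(n-4)/2}\big)$, and $\phi''$ is of order $t^{-1}L^{(n-4)/2}$ with leading coefficient $\tfrac{n-2}{2}\cdot\tfrac{1}{(e^{2n}D)^2}$ (for $n=2$ the factor $\phi''$ vanishes identically, which must be handled separately — there $w$ is exactly of the Lemma \ref{Lipsub} type up to the log in $\psi$, and the argument simplifies). Crucially $\psi'' $ has leading term $\tfrac{(1+4D^2)}{(e^{2n}D)^2}\cdot\tfrac{n}{2}\,t^{-1}L^{(n-2)/2}$, which dominates $\phi''|x'|^2$ and $(\phi')^2|x'|^2/\phi$ because of the extra power of $L$ and the factor $1+4D^2 > |x'|^2$; after collecting, the bracket is bounded below by $c\,t^{-1}L^{(n-2)/2}$ for $L \geq 2n$ (this is where the constant $e^{2n}$ in the definition is used — it forces $L$ large enough that the $L^{(n-2)/2}$ term beats all the $L^{(n-4)/2}$-order corrections). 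Since $\phi = tL^{(n-2)/2} > 0$, this also yields positivity of the bottom-right entry, and together with the positive upper-left block and the exact determinant formula (whose positivity we have just shown) one gets that all leading principal minors are positive, hence $D^2 w > 0$ and $w$ is convex in $\Omega$.

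Combining, $\det D^2 w = (2\phi)^{n-1}\cdot[\text{bracket}] \geq 2^{n-1}(tL^{(n-2)/2})^{n-1}\cdot c\,t^{-1}L^{(n-2)/2} = c'\,t^{n-2}L^{(n-2)^2/2 + (n-2)/2} = c'\,t^{n-2}L^{(n-2)(n-1)/2}\cdot L^{?}$— one checks the exponent of $L$ equals $\tfrac{n-2}{2}\cdot n$, matching $(|w|)^{n-2}$ up to constants since $|w| \leq C\,t\,L^{n/2}$, so $|w|^{n-2} \leq C\,t^{n-2}L^{n(n-2)/2}$. Thus $\det D^2 w \geq c\,|w|^{n-2}$ with $c = c(n,D)$. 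For the boundary values: on $\p\Omega \cap \{x_n = 0\}$ we have $t \to 0$ and $w = O(t L^{n/2}) \to 0$; elsewhere on $\p\Omega$, since $|x'| \leq D$ the first term is $\leq 0$ and the second is $\leq 0$, so $w \leq 0$ on all of $\p\Omega$. The main obstacle is bookkeeping the competing powers of $L$ in the bracket and pinning down that the $\psi''$ term, which carries the extra power of $L$ and the large coefficient $1+4D^2$, genuinely dominates — making the bracket positive and of the right order — uniformly for all $x \in \Omega$; the choice of the shift $e^{2n}D$ is precisely what guarantees $L \geq 2n$ so that the subleading terms are controlled, and the $n=2$ case needs to be checked directly since the $\phi''$ block degenerates.
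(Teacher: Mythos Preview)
Your approach is essentially the paper's: write $w=\phi(x_n)(|x'|^2-D^2)-\psi(x_n)$ with $\phi=tL^{(n-2)/2}$ and $\psi=(1+4D^2)\,tL^{n/2}$, compute the block Hessian, and show that the $-\psi''$ contribution dominates, yielding $\det D^2 w\gtrsim t^{n-2}L^{n(n-2)/2}\gtrsim |w|^{n-2}$. Two small corrections: (i) the Schur-complement coefficient in your determinant formula is $2$, not $n-1$ --- the block formula gives $\det D^2 w=(2\phi)^{n-1}\bigl[\,\phi''(|x'|^2-D^2)-\psi''-\tfrac{2(\phi')^2}{\phi}|x'|^2\,\bigr]$ (your final inequality survives, since $\tfrac{n}{2}(1+4D^2)>2D^2$ just as well as $(n-1)D^2$); (ii) the term $\tfrac{2(\phi')^2}{\phi}|x'|^2$ has the \emph{same} power $L^{(n-2)/2}$ as $-\psi''$, not a lower one, so the domination there comes purely from the coefficient comparison $(1+4D^2)>D^2\geq |x'|^2$, not from an ``extra power of $L$''. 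The paper handles this by working with the exact one-variable bounds $0\le f_\alpha'(t)\le |\log t|^\alpha$ and $-f_\alpha''(t)>\tfrac{\alpha}{2t}|\log t|^{\alpha-1}$ on $(0,e^{-2n})$ rather than leading-order asymptotics, which makes the constant $(1+\alpha)(1+4D^2)-4D^2\ge 1$ explicit and avoids tracking subleading terms.
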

\begin{proof}
For $\alpha\in [0, n)$ and $t\in (0, e^{-2n})$, let
\[f_\alpha(t) := t (-\log t)^{\alpha}.\]
Then 
\[f_\alpha'(t) = (-\log t)^\alpha -\alpha (-\log t)^{\alpha-1},\quad f_\alpha''(t) = -\frac{\alpha}{t} (-\log t)^{\alpha-1}  -\frac{\alpha (1-\alpha)}{t} (-\log t)^{\alpha-2}. \]
Observe that
\[0<f_\alpha(t), \quad 0\leq f_\alpha'(t) \leq  |\log t|^\alpha, \quad  \frac{\alpha |\log t|^{\alpha-1}}{ 2t }<-f_\alpha''(t) \quad \text{for } t\in (0, e^{-2n}),\alpha\in [0, n).\]
Let
\[E =1+ 4D^2,\quad s= e^{2n} D,\quad y'= x'/s,\quad y_n=x_n/s\quad\text{for } x=(x', x_n)\in\Omega.\]
Consider
\[ u_{\alpha}(x):= f_\alpha(x_n/s) (|x'|^2-D^2) -E f_{\alpha+1} (x_n/s)\equiv f_\alpha(y_n) (|x'|^2-D^2) -E f_{\alpha+1} (y_n).\]
Then, $y_n\in (0, e^{-2n})$ and
\begin{equation}
\label{ualE}
|u_{\alpha}(x)| \leq  2E y_n |\log y_n|^{\alpha+1}\quad\text{in }\Omega.\end{equation}
We have
\begin{equation*}
 D^2 u_{\alpha}(x) =
  \begin{pmatrix}
  2f_\alpha(y_n) & 0 & \cdots & 0 & 2y_1 f_\alpha'(y_n) \\
  0 & 2f_\alpha(y_n) & \cdots & 0 & 2y_2  f_\alpha'(y_n) \\
  \vdots  & \vdots  & \ddots & \vdots & \vdots  \\
  0 & 0 & \cdots & 2 f_\alpha(y_n) &  2y_{n-1}  f_\alpha'(y_n)\\
  2y_1  f_\alpha'(y_n) & 2y_2  f_\alpha'(y_n) & \cdots & 2y_{n-1}  f_\alpha'(y_n) &  \frac{f_\alpha''(y_n) (|x'|^2-D^2) - Ef''_{\alpha+1} (y_n)}{s^2} 
 \end{pmatrix}.
\end{equation*}
Note that the first $(n-1)$-leading principal minors of $D^2 u_{(\alpha)}$ are all positive in $\Omega$.
We can compute by induction that 
\begin{equation*}
\begin{split}\det D^2 u_{\alpha}(x)&= 2^{n-1}s^{-2} [f_\alpha(y_n)]^{n-1} \big[ f_\alpha''(y_n) (|x'|^2-D^2)-E f''_{\alpha+1} (y_n)\big]\\ &\quad- 2^n s^{-2} |x'|^2 [ f_\alpha'(y_n)]^2 [ f_\alpha(y_n)]^{n-2}\\ &\geq  -2^{n-1} s^{-2} E   [f_\alpha(y_n)]^{n-1}  f_{\alpha+ 1}''(y_n)- 2^n s^{-2} D^2 |\log y_n|^{2\alpha} [ f_\alpha(y_n)]^{n-2}\\&\geq 2^{n-1} s^{-2} E   [y_n |\log y_n|^\alpha]^{n-1}  \frac{\alpha+1}{ 2y_n}|\log y_n|^\alpha\\&\quad- 2^n s^{-2} D^2 |\log y_n|^{2\alpha} [y_n |\log y_n|^\alpha]^{n-2}\\&= 2^{n-2} s^{-2} y_n^{n-2} |\log y_n|^{n\alpha} [(1+\alpha) E- 4D^2]\\&\geq 2^{n-2} s^{-2} y_n^{n-2} |\log y_n|^{n\alpha}.
\end{split}
\end{equation*}
Clearly, $u_{\alpha}\in C^{\infty}(\Omega)$, $u_{\alpha}$ is convex in $\Omega$ with $u_{\alpha}\leq 0$ on $\p\Omega$. Moreover,
when \[(\alpha+ 1)(n-2)= n\alpha,\quad\text{or } \alpha= (n-2)/2,\]
we obtain from the preceding inequalities and \eqref{ualE} that 
\[\det D^2 u_{(n-2)/2}\geq 2^{n-2} s^{-2} (y_n|\log y_n|^{n/2})^{n-2}\geq c(n,\diam(\Omega)) |u_{(n-2)/2}|^{n-2}\quad \text{in }\Omega,\]
for some constant $c(n,\diam(\Omega)) >0$. 

Since $w= u_{(n-2)/2}$, the lemma is proved.
\end{proof}

Next, we construct supersolutions with infinite boundary gradient. Note that these supersolutions are not necessarily convex. 
\begin{lem} [Supersolutions with infinite boundary gradient.]
\label{inflem}
Assume $0<x_n<e^{-1}$ for all $x\in\Omega\subset\R^n$ ($n\geq 2$). Let 
\[v(x)= x_n [(-\log x_n)^{1/n}-1] \big(|x'|^2-1\big),\quad x=(x', x_n)\in\Omega.\]
Then $v\in C^{\infty}(\Omega)$ and
\[\det D^2 v(x) \leq 2^n x_n^{n-2}\quad \text{in }\Omega.\]
\end{lem}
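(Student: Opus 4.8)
The plan is to compute the Hessian of $v(x) = x_n[(-\log x_n)^{1/n} - 1](|x'|^2 - 1)$ explicitly, observe that it has the same block structure as in Lemma \ref{Lipsub} and Lemma \ref{logn2lem}, and then expand the determinant by induction along the last row and column. Write $g(t) = t[(-\log t)^{1/n} - 1]$ for $t \in (0, e^{-1})$, so that $v(x) = g(x_n)(|x'|^2 - 1)$. First I would record
\[
g'(t) = (-\log t)^{1/n} - 1 - \tfrac{1}{n}(-\log t)^{1/n - 1}, \qquad g''(t) = -\tfrac{1}{nt}(-\log t)^{1/n - 1} - \tfrac{1-n}{n^2 t}(-\log t)^{1/n - 2},
\]
the key points being that $g(t) > 0$ (since $-\log t > 1$ forces $(-\log t)^{1/n} > 1$), that $g''(t) < 0$ when $n \ge 2$ — because the second term has the sign of $n - 1 > 0$ only if we are careless; in fact for $n\ge 2$ one checks $g''(t) = -\tfrac{1}{nt}(-\log t)^{1/n-2}[(-\log t) + \tfrac{1-n}{n}] < 0$ once $-\log t > 1$ — and a clean upper bound $0 \le g'(t) \le (-\log t)^{1/n}$.

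The Hessian is
\[
D^2 v(x) = \begin{pmatrix} 2g(x_n) & & & 2x_1 g'(x_n) \\ & \ddots & & \vdots \\ & & 2g(x_n) & 2x_{n-1}g'(x_n) \\ 2x_1 g'(x_n) & \cdots & 2x_{n-1}g'(x_n) & g''(x_n)(|x'|^2 - 1) \end{pmatrix},
\]
a diagonal $(n-1)\times(n-1)$ block $2g(x_n)I_{n-1}$ bordered by the vector $2g'(x_n)x'$ and the corner entry $g''(x_n)(|x'|^2-1)$. Expanding the determinant exactly as in \eqref{vsub} gives
\[
\det D^2 v(x) = 2^{n-1}[g(x_n)]^{n-1} g''(x_n)(|x'|^2 - 1) - 2^n[g'(x_n)]^2[g(x_n)]^{n-2}|x'|^2.
\]
Since $g > 0$, $g'' < 0$, and $|x'|^2 - 1 \le 0$ is \emph{not} what we want here — rather, on $\Omega$ we need an \emph{upper} bound, so I would drop the (nonpositive) second term $-2^n[g']^2[g]^{n-2}|x'|^2 \le 0$ and also drop $-1$ from $|x'|^2 - 1$ in the first term only if that term has the right sign; since $g'' < 0$ and $|x'|^2 - 1$ may be positive or negative, the first term is $\le 2^{n-1}[g(x_n)]^{n-1}|g''(x_n)|$ (using $|x'|^2 - 1 \ge -1$ and $g'' < 0$, so $g''(|x'|^2-1) \le |g''|$). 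Then I would bound $[g(x_n)]^{n-1} \le x_n^{n-1}(-\log x_n)^{(n-1)/n}$ and $|g''(x_n)| \le \tfrac{C}{x_n}(-\log x_n)^{1/n - 1}$ for a dimensional constant, which combine to $[g(x_n)]^{n-1}|g''(x_n)| \le C\, x_n^{n-2}(-\log x_n)^{(n-1)/n + 1/n - 1} = C\, x_n^{n-2}$; tracking the constant carefully should yield exactly $2^n x_n^{n-2}$ as claimed, or one adjusts the bounds on $g$, $g''$ to land on that constant.

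The main obstacle is bookkeeping: getting the constant in $|g''(t)| \le \tfrac{1}{nt}(-\log t)^{1/n-1}\cdot(\text{something} \le 2)$ sharp enough that the product of constants is at most $2^n$, and verifying $g'' < 0$ on the full range $0 < x_n < e^{-1}$ (this is where the hypothesis $x_n < e^{-1}$, i.e. $-\log x_n > 1$, is used, since it controls the sign of $(-\log t) + \tfrac{1-n}{n}$). Smoothness $v \in C^\infty(\Omega)$ is immediate since $x_n > 0$ there. This lemma then feeds into the proof of Theorem \ref{pn2}(i): comparing $u$ from below is handled by Lemma \ref{logn2lem}, and this supersolution $v$ — after scaling and using $\det D^2 v \le 2^n x_n^{n-2} \le 2^n[\dist(\cdot,\partial\Omega)]^{n-2}$ together with $\det D^2 u = |u|^{n-2}$ and the comparison principle \cite[Theorem 3.21]{Lbook}, or rather a subcritical-comparison variant — gives the upper bound on $|u|$.
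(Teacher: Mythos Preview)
Your approach is essentially identical to the paper's: write $v(x)=g(x_n)(|x'|^2-1)$ with $g(t)=t[(-\log t)^{1/n}-1]$, compute the bordered-Hessian determinant, drop the nonpositive term $-2^n[g']^2 g^{n-2}|x'|^2$, and use $|x'|^2-1\ge-1$ together with $0<g(t)\le t(-\log t)^{1/n}$ and $|g''(t)|\le \tfrac{2}{nt}(-\log t)^{1/n-1}$ to arrive at $\det D^2 v\le \tfrac{2^n}{n}x_n^{n-2}\le 2^n x_n^{n-2}$. Two minor slips worth cleaning up: in your factorization of $g''$ the additive constant should be $\tfrac{n-1}{n}$ rather than $\tfrac{1-n}{n}$ (harmless, since either way the bracket is positive for $-\log t>1$), and your side claim $g'\ge 0$ fails near $t=e^{-1}$ but is never used since only $[g']^2$ appears; finally, this supersolution feeds into Theorem~\ref{pn2}(ii) (the \emph{lower} bound on $|u|$ near a flat boundary piece), not part~(i).
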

\begin{proof}
For $\alpha\in (0, 1)$ and $t\in (0, e^{-1})$, let 
\[g_\alpha(t)= t (-\log t)^{\alpha}-t.\]
Note that \[0<g_\alpha(t)< t|\log t|^\alpha, \quad 0<-g_\alpha''(t) \leq \frac{2\alpha |\log t|^{\alpha-1}}{ t }\quad \text{for all } t\in (0, e^{-1}),\,\alpha\in (0, 1).\]
Let
\[ v_{\alpha}(x):= g_\alpha(x_n) (|x'|^2-1)\quad \text{for } x\in\Omega.\]
Then, we can compute as in Lemma \ref{logn2lem} that
\begin{equation}
\label{log1n}
\begin{split}\det D^2 v_{\alpha}(x)&= 2^{n-1} [g_\alpha(x_n)]^{n-1}  g_\alpha''(x_n) (|x'|^2-1)- 2^n |x'|^2 [ g_\alpha'(x_n)]^2 [ g_\alpha(x_n)]^{n-2}\\ &\leq  -2^{n-1}  [g_\alpha(x_n)]^{n-1}  g_\alpha''(x_n)\\&\leq 2^n \alpha x_n^{n-2} |\log x_n|^{n\alpha-1}.\end{split}
\end{equation}
Thus, choosing $\alpha =1/n$, we have $v= v_{1/n}\in C^{\infty}(\Omega)$, and 
$\det D^2 v (x) \leq 2^n x_n^{n-2}$ in $\Omega$. The lemma is proved.
\end{proof}
\begin{proof}[Proof of Theorem \ref{pn2}]
We first prove part (i). Let $z=(z', z_n)$ be an arbitrary point in $\Omega$. By translation and rotation of coordinates, we can assume that:  $0\in \p\Omega$,  $\Omega\subset \R^n_+$, 
 the $x_n$-axis points inward $\Omega$, $z$ lies on the $x_n$-axis so $z=(0, z_n)$, and $z_n=\dist (z,\p\Omega)$.  Let $D=\diam (\Omega)$. 
 We  need to show that \begin{equation}
\label{ulog}
|u(z)| \leq C(n, D) z_n \bigg(1+ |\log z_n|^{\frac{n}{2}}\bigg). 
\end{equation}
Let $w$ be the convex function in Lemma \ref{logn2lem}. Recall that \[\det D^2 w\geq c(n, D)|w|^{n-2},\quad  \det D^2 u=|u|^{n-2}\quad \text{in }\Omega.\]
 From the consequence of the comparison principle in Lemma \ref{comp_lem}, we have 
\[|u|\leq C(n, D)|w| \quad\text{in }\Omega.\] 
 Since \[|w(z)|\leq C(n, D)z_n\bigg|\log \frac{z_n}{e^{2n}D}\bigg|^{n/2},\]
 we obtain \eqref{ulog}.

\medskip
Now, we prove part (ii). Fix $z\in \Omega$ being close to $\Gamma$. By translating and rotating coordinates, we can assume that for some $s=s(n, \Omega,\Gamma)\in (0, e^{-1})$,
\[z=(0, z_n)\in \Omega_{s}:=\{(x', x_n): |x'|<s, 0<x_n< s\}\subset\Omega,\] \[ \{(x', 0):|x'|\leq s\}\subset\Gamma\subset\p\Omega,
\quad\text{and }\dist(x,\p\Omega) = x_n\quad\text{for } x\in\Omega_s.\]
Let $g(t) = t (-\log t)^{1/n}-t$ and 
\[\bar w(x) = g(x_n/(es)) (|x'|^2 -s^2) \quad\text{in }\Omega_s.\]
Then, $\bar w\in C^{\infty}(\Omega_s)$. As in \eqref{log1n}, we have 
\begin{equation*}\begin{split}\det D^2 \bar w &\leq -2^{n-1} e^{-2}[g(x_n/(es))]^{n-1}  g''(x_n/(es)) \\ &\leq 2^n (es)^{2-n} x_n^{n-2}\\&= 2^n (es)^{2-n} [\dist(x,\p\Omega)]^{n-2} \quad\text{in }\Omega_s.\end{split}\end{equation*}
On the other hand, the convexity of $u$ and \eqref{upinfty} give
\begin{equation*}|u(x)|\geq \frac{\dist(x,\p\Omega)}{\diam (\Omega)}\|u\|_{L^{\infty}(\Omega)} \geq c(n,\Omega) \dist(x,\p\Omega).\end{equation*}
Thus, for some $R= R(n,s,\Omega)>0$, we have
\begin{equation}
\label{Ruw}
\det D^2 (Ru) =R^{n} |u|^{n-2}> \det D^2 \bar w\quad\text{in }\Omega_s. \end{equation}
Observe that 
\begin{equation}
\label{Ruw2}
Ru\leq \bar w\quad \text{in }\overline{\Omega_s}.
\end{equation}

 Indeed, since $\bar w=0$ on $\p\Omega_s$, this holds on $\p\Omega_s$. Suppose otherwise that $Ru-\bar w$ attains a positive maximum at $\bar x\in\Omega_s$. Then, $D^2 \bar w (\bar x)\geq RD^2 u(\bar x)$, as symmetric matrices. Now, the convexity of $u$ gives $RD^2 u(\bar x)\geq 0$ and $\det D^2 \bar w(\bar x) \geq \det D^2 (Ru) (\bar x)$, which contradicts \eqref{Ruw}. 
 
 \medskip
 \noindent
 Using \eqref{Ruw2} at $z=(0, z_n)\in\Omega_s$ where $0<z_n$ is small (due to $z$ being close to $\Gamma$), we have
\begin{equation*}
\begin{split}
|u(z)|\geq |\bar w(z)|/R&\geq c_1(n,\Gamma,\Omega) \dist (z,\p\Omega) \bigg (\bigg|\log \frac{\dist (z,\p\Omega)}{es}\bigg|^{\frac{1}{n}}-1\bigg)
\\& \geq c(n,\Gamma,\Omega) \dist (z,\p\Omega) |\log \dist (z,\p\Omega)|^{\frac{1}{n}}.
\end{split}
\end{equation*}
The theorem is proved.
\end{proof}
\begin{rem} The conclusion of Theorem \ref{pn2}(ii) still holds if $u\in C(\overline{\Omega})\cap C^2(\Omega)$ is a convex function satisfying
 \begin{equation*}
   \det D^{2} u~\geq  [\dist(\cdot,\p\Omega)]^{n-2} \quad \text{in} ~\Omega, \quad
u =0\quad \text{on}~\p \Omega.
\end{equation*}
\end{rem}
\medskip

{\bf Acknowledgements.} The author would like to thank the referee for carefully reading the paper and providing constructive comments.

\end{document}